\numberwithin{equation}{section}
\numberwithin{figure}{section}
\theoremstyle{plain}
\newtheorem{thm}{\protect\theoremname}[section]
  \theoremstyle{plain}
  \newtheorem{fact}[thm]{\protect\factname}
  \theoremstyle{definition}
  \newtheorem{example}[thm]{\protect\examplename}
  \theoremstyle{definition}
  \newtheorem{defn}[thm]{\protect\definitionname}
  \theoremstyle{remark}
  \newtheorem{rem}[thm]{\protect\remarkname}
  \theoremstyle{plain}
  \newtheorem{lem}[thm]{\protect\lemmaname}
  \theoremstyle{plain}
  \newtheorem{prop}[thm]{\protect\propositionname}
\newenvironment{lyxlist}[1]
{\begin{list}{}
{\settowidth{\labelwidth}{#1}
 \setlength{\leftmargin}{\labelwidth}
 \addtolength{\leftmargin}{\labelsep}
 }}
{\end{list}}
 \newlist{casenv}{enumerate}{4}
 \setlist[casenv]{leftmargin=*,align=left,widest={iiii}}
 \setlist[casenv,1]{label={{\itshape\ \casename} \arabic*.},ref=\arabic*}
 \setlist[casenv,2]{label={{\itshape\ \casename} \roman*.},ref=\roman*}
 \setlist[casenv,3]{label={{\itshape\ \casename\ \alph*.}},ref=\alph*}
 \setlist[casenv,4]{label={{\itshape\ \casename} \arabic*.},ref=\arabic*}
  \theoremstyle{plain}
  \newtheorem{cor}[thm]{\protect\corollaryname}
  \theoremstyle{definition}
  \newtheorem{problem}[thm]{\protect\problemname}
  \theoremstyle{remark}
  \newtheorem{claim}[thm]{\protect\claimname}
  \providecommand{\claimname}{Claim}
  \providecommand{\corollaryname}{Corollary}
  \providecommand{\definitionname}{Definition}
  \providecommand{\examplename}{Example}
  \providecommand{\factname}{Fact}
  \providecommand{\lemmaname}{Lemma}
  \providecommand{\problemname}{Problem}
  \providecommand{\propositionname}{Proposition}
  \providecommand{\remarkname}{Remark}
 \providecommand{\casename}{Case}
\providecommand{\theoremname}{Theorem}
\begin{document}
\def\Ind#1#2{#1\setbox0=\hbox{$#1x$}\kern\wd0\hbox to 0pt{\hss$#1\mid$\hss}
\lower.9\ht0\hbox to 0pt{\hss$#1\smile$\hss}\kern\wd0}
\def\Notind#1#2{#1\setbox0=\hbox{$#1x$}\kern\wd0\hbox to 0pt{\mathchardef
\nn="3236\hss$#1\nn$\kern1.4\wd0\hss}\hbox to 0pt{\hss$#1\mid$\hss}\lower.9\ht0
\hbox to 0pt{\hss$#1\smile$\hss}\kern\wd0}
\def\indi{\mathop{\mathpalette\Ind{}}}
\def\nindi{\mathop{\mathpalette\Notind{}}}
\def\bdd {bdd}

\global\long\def\acl{\operatorname{acl}}

\global\long\def\cl{\operatorname{cl}}

\global\long\def\Avg{\operatorname{Avg}}

\global\long\def\Sk{\operatorname{Sk}}

\global\long\def\inp{\operatorname{inp}}

\global\long\def\dprk{\operatorname{dprk}}

\global\long\def\ind{\operatorname{\indi}}

\global\long\def\nind{\operatorname{\nindi}}

\global\long\def\ist{\operatorname{ist}}

\global\long\def\Aut{\operatorname{Aut}}

\global\long\def\M{\operatorname{\mathbb{M}}}

\global\long\def\NTP{\operatorname{NTP}}

\global\long\def\NIP{\operatorname{NIP}}

\global\long\def\IP{\operatorname{IP}}

\global\long\def\TP{\operatorname{TP}}

\global\long\def\tp{\operatorname{tp}}

\global\long\def\NSOP{\operatorname{NSOP}}

\global\long\def\bdn{\operatorname{bdn}}

\global\long\def\vstp{\operatorname{vstp}}

\global\long\def\lstp{\operatorname{Lstp}}

\global\long\def\sf{\operatorname{sf}}

\global\long\def\T{\operatorname{\mathbb{T}}}

\global\long\def\Q{\operatorname{\mathbb{Q}}}

\global\long\def\ot{\operatorname{ot}}

\global\long\def\Ded{\operatorname{ded}}

\global\long\def\Sym{\operatorname{Sym}}

\global\long\def\ded{\operatorname{ded}}

\global\long\def\cof{\operatorname{cf}}

\global\long\def\pp{\operatorname{pp}}

\global\long\def\Reg{\operatorname{Reg}}

\global\long\def\Th{\operatorname{Th}}

\global\long\def\tcof{\operatorname{tcf}}

\global\long\def\ran{\operatorname{ran}}

\title{On the number of Dedekind cuts and two-cardinal models of dependent
theories}

\author{Artem Chernikov and Saharon Shelah}

\thanks{The first author has received funding from the European Research
Council under the European Union\textquoteright{}s Seventh Framework
Programme {[}FP7/2007-2013{]} under grant agreement n\textdegree{}
238381 and from the ERC Grant Agreement No. 291111.}

\thanks{The second author would like to thank the Israel Science Foundation
for partial support of this research (Grant no. 1053/11). Publication
1035 on his list.}

\keywords{Dedekind cuts, linear orders, trees, cardinal arithmetic, PCF, two-cardinal
models, omitting types, dependent theories, NIP}
\begin{abstract}
For an infinite cardinal $\kappa$, let $\ded\kappa$ denote the supremum
of the number of Dedekind cuts in linear orders of size $\kappa$.
It is known that $\kappa<\ded\kappa\leq2^{\kappa}$ for all $\kappa$
and that $\ded\kappa<2^{\kappa}$ is consistent for any $\kappa$
of uncountable cofinality. We prove however that $2^{\kappa}\leq\ded\left(\ded\left(\ded\left(\ded\kappa\right)\right)\right)$
always holds. Using this result we calculate the Hanf numbers for
the existence of two-cardinal models with arbitrarily large gaps and
for the existence of arbitrarily large models omitting a type in the
class of countable dependent first-order theories. Specifically, we
show that these bounds are as large as in the class of all countable
theories.
\end{abstract}
\maketitle

\section{Introduction}

For an infinite cardinal $\kappa$, let 
\[
\ded\kappa=\sup\left\{ \left|I\right|:I\mbox{ is a linear order with a dense subset of size }\leq\kappa\right\} \mbox{.}
\]
In general the supremum need not be attained. Let $I$ be a linear
order and let $\mathfrak{c}=\left(I_{1},I_{2}\right)$ be a cut of
$I$ (i.e. $I=I_{1}\cup I_{2}$, $I_{1}\cap I_{2}=\emptyset$ and
$i_{1}<i_{2}$ for all $i_{1}\in I_{1},i_{2}\in I_{2}$). By \emph{cofinality
of $\mathfrak{c}$ from the left} (respectively, \emph{from the right})
we mean the cofinality of the linear order induced on $I_{1}$ (resp.
the cofinality of $I_{2}^{*}$, that is $I_{2}$ with the order reversed).
\begin{fact}
\label{fac: equivalent definitions of ded}The following cardinalities
are the same, see e.g. \cite[Proposition 6.5]{NFSpectra}:
\begin{enumerate}
\item $\ded\kappa$,
\item $\sup\left\{ \lambda:\mbox{exists a linear order }I\mbox{ of size }\leq\kappa\mbox{ with }\lambda\mbox{ cuts}\right\} $,
\item $\sup\{\lambda:$ exists a regular $\mu$ and a linear order of size
$\leq\kappa$ with $\lambda$ cuts of cofinality $\mu$ both from
the left and from the right$\}$,
\item $\sup\left\{ \lambda:\mbox{ exists a regular }\mu\mbox{ and a tree }T\mbox{ of size }\leq\kappa\mbox{ with }\lambda\mbox{ branches of length }\mu\right\} $.
\end{enumerate}
\end{fact}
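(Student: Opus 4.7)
The plan is to close the cycle $(1) \leq (2) \leq (4) \leq (3) \leq (2) \leq (1)$. The equivalence $(1) \Leftrightarrow (2)$ is routine: a dense subset $D \subseteq I$ of size $\leq \kappa$ sends distinct points of $I$ to distinct cuts of $D$ by density, giving $(1) \leq (2)$; conversely, any $J$ of size $\leq \kappa$ with $\lambda$ cuts can be densified by inserting copies of $\mathbb{Q}$ between consecutive pairs to produce $D \supseteq J$ of size $\leq \kappa$ with no fewer cuts, and then $D$ together with one fresh point inserted into each gap cut of $D$ is a linear order of size $\geq \lambda$ in which $D$ remains dense, giving $(2) \leq (1)$. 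The inequality $(3) \leq (2)$ is immediate from the definitions.

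For $(2) \leq (4)$, I would use a refinement tree. Fix a bijective enumeration $\{j_\alpha : \alpha < \kappa\}$ of the underlying set of $J$, ignoring its linear order, and let $T$ have as its level-$\alpha$ nodes the (open) gaps of the suborder $\{j_\beta : \beta < \alpha\}$, ordered by reverse refinement. Then $|T| \leq \sum_{\alpha<\kappa}(|\alpha|+1) \leq \kappa$, and every non-realized cut of $J$ sits in a unique gap at each level, producing a branch of length $\kappa$; distinct cuts are separated by some $j_\alpha$ and hence land in different gaps from level $\alpha+1$ onward, so the assignment is injective. Since only $\leq 2\kappa+2$ cuts of $J$ are realized, we retain $\lambda$ branches whenever $\lambda > \kappa$ (the remaining case being trivial). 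If $\kappa$ is singular, restricting $T$ to a cofinal sequence of levels of order type $\mu = \cof(\kappa)$ preserves size, branches, and distinctness while converting them to branches of regular length $\mu$.

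For $(4) \leq (3)$, I would lex-embed the given tree $T$ into a linear order. First prune $T$ to the nodes of level $<\mu$ lying on some length-$\mu$ branch, and pad every surviving node with countably many dummy immediate successors (on no branch). Inject the immediate-successor set of each node $s$ densely and without endpoints into $\mathbb{Q}$ via some $\phi_s$, and declare $s <_L t$ iff $s$ is a proper ancestor of $t$, or $s, t$ are incomparable with nearest common ancestor $u$ and $\phi_u(s') <_\mathbb{Q} \phi_u(t')$, where $s', t'$ are the immediate successors of $u$ below $s, t$ respectively. Each length-$\mu$ branch $b = (b_\alpha)_{\alpha < \mu}$ is then an increasing $<_L$-sequence cofinal on the left of the cut it induces; on the right, for each $\alpha$ I pick an immediate successor $s_\alpha$ of $b_\alpha$ with $\phi_{b_\alpha}(s_\alpha) >_\mathbb{Q} \phi_{b_\alpha}(b_{\alpha+1})$, available by density, giving a coinitial $\mu$-sequence. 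The main obstacle is this cofinality bookkeeping: verifying that both cofinalities equal $\mu$ requires the preliminary pruning-and-padding, which rules out the two pathologies (a node of $T$ extending the entire branch, and siblings too sparse to supply $s_\alpha$) that would otherwise collapse the right cofinality. Distinct branches yield distinct cuts by the standard lex-separation fact.
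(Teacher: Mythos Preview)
The paper does not give a proof of this fact; it merely records it with a citation to \cite[Proposition 6.5]{NFSpectra}. Your cycle $(1)\leq(2)\leq(4)\leq(3)\leq(2)\leq(1)$ is the standard argument and is correct in substance.

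One small repair in the $(4)\leq(3)$ step: you write ``inject the immediate-successor set of each node $s$ densely and without endpoints into $\mathbb{Q}$ via some $\phi_s$'', but a node of $T$ may have up to $\kappa$ immediate successors, so a literal injection into $\mathbb{Q}$ need not exist. Fortunately your cofinality analysis never uses that the target is $\mathbb{Q}$; it only uses that the order on the padded successor set has no maximum (so that an $s_\alpha$ with $\phi_{b_\alpha}(s_\alpha)>\phi_{b_\alpha}(b_{\alpha+1})$ can always be found). So it suffices to fix an arbitrary linear order on each successor set and append an $\omega$-sequence of dummy successors on top. For separating the cuts of two distinct branches $b,b'$ diverging at level $\delta$ you do not even need a dummy in between: assuming $\phi_{b_\delta}(b_{\delta+1})<\phi_{b_\delta}(b'_{\delta+1})$, the node $b'_{\delta+1}$ itself lies to the right of the cut of $b$ (it branches off to the right at $b_\delta$) and to the left of the cut of $b'$ (it lies on $b'$). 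With this adjustment the rest of your argument, including the verification that both cofinalities equal $\mu$ and the handling of the $\lambda\leq\kappa$ edge case, goes through as written.
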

It is well-known that $\kappa<\ded\kappa\leq\left(\ded\kappa\right)^{\aleph_{0}}\leq2^{\kappa}$
(for the first inequality, let $\mu$ be minimal such that $2^{\mu}>\kappa$,
and consider the tree $2^{<\mu}$) and that $\ded\aleph_{0}=2^{\aleph_{0}}$
(as $\mathbb{Q}\subseteq\mathbb{R}$ is dense). Thus $\ded\kappa=\left(\ded\kappa\right)^{\aleph_{0}}=2^{\kappa}$
for all $\kappa$ in a model with GCH. Moreover, Baumgartner \cite{Baum}
had shown that if $2^{\kappa}=\kappa^{+n}$ (i.e. the $n$th successor
of $\kappa$) for some $n\in\omega$, then $\ded\kappa=2^{\kappa}$.
On the other hand, for any $\kappa$ of uncountable cofinality Mitchell
\cite{Mitchell} had proven that consistently $\ded\left(\kappa\right)<2^{\kappa}$.
Besides, in \cite[Section 6]{NFSpectra} it is demonstrated that for
some $\kappa$ it is consistent that $\ded\kappa<\left(\ded\kappa\right)^{\aleph_{0}}$
(but it is still open if both inequalities $\ded\kappa\leq\left(\ded\kappa^{\aleph_{0}}\right)\leq2^{\kappa}$
can be strict simultaneously). The importance of the function $\ded\kappa$
from the model-theoretic point of view is largely due to the following
fact:
\begin{fact}
\label{fac: 6 stab functions}\cite{KeislerStabF,MR1083551} Let $T$
be a complete first-order theory in a countable language $L$. For
a model $M$ of $T$, $S_{1}\left(M\right)$ denotes the space of
$1$-types over $M$ (i.e. the space of ultrafilters on the Boolean
algebra of definable subsets of $M$). Define $f_{T}\left(\kappa\right)=\sup\left\{ \left|S_{T}\left(M\right)\right|:M\models T,\left|M\right|=\kappa\right\} $.
Then for any countable $T$, $f_{T}$ is one of the following functions:
$\kappa$, $\kappa+2^{\aleph_{0}}$, $\kappa^{\aleph_{0}}$, $\ded\kappa$,
$\left(\ded\kappa\right)^{\aleph_{0}}$ or $2^{\kappa}$ (and each
of these functions occurs for some $T$).
\end{fact}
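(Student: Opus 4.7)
The plan is to classify each countable complete theory $T$ according to the standard model-theoretic dividing lines and compute the supremum of $\left|S_{1}(M)\right|$ over models of size $\kappa$ separately in each class; the six functions listed are then the possible values, and one finishes by exhibiting an example theory attaining each bound. The trivial envelope $\left|S_{1}(M)\right|\leq2^{\left|M\right|}$ always holds, so the task reduces to producing sharper upper bounds in the subclasses.

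First I would split along the independence property. If $T$ has IP, witnessed by a formula $\varphi(x,y)$ and a sequence $(a_{i})_{i<\omega}$ on which every subset is consistently cut out by some instance $\varphi(b_{S},y)$, coding arbitrary subsets of a parameter set of size $\kappa$ produces $2^{\kappa}$ distinct $1$-types, giving $f_{T}(\kappa)=2^{\kappa}$. If $T$ is NIP but unstable, I would use that every NIP formula has finite alternation number on indiscernible sequences; consequently the $\varphi$-type of an element over $M$ is determined by countably many Dedekind cuts in definable linear quasi-orders on tuples from $M$. By Fact~\ref{fac: equivalent definitions of ded}(2) this yields $\left|S_{\varphi}(M)\right|\leq\ded\kappa$ per formula, and combining across the countably many formulas of $L$ bounds $\left|S_{1}(M)\right|$ by $(\ded\kappa)^{\aleph_{0}}$; in the presence of a single dominant order these cuts amalgamate and the value collapses to $\ded\kappa$.

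For stable $T$, the argument uses definability of types over models: every $1$-type over $M$ is coded by its family of $\varphi$-definitions as $\varphi$ ranges over $L$. Counting $\varphi$-definitions inductively gives $\kappa$ types in the $\omega$-stable case, $\kappa+2^{\aleph_{0}}$ in the superstable but not $\omega$-stable case, and $\kappa^{\aleph_{0}}$ otherwise. Each of the six bounds is then witnessed by a canonical example: pure equality for $\kappa$; a suitable abelian group such as $(\mathbb{Z},+)$ in the pure group language for $\kappa+2^{\aleph_{0}}$; an $\omega$-sequence of refining equivalence relations for $\kappa^{\aleph_{0}}$; DLO for $\ded\kappa$; a dense linear order enriched with countably many independent linear orders for $(\ded\kappa)^{\aleph_{0}}$; and the random graph for $2^{\kappa}$.

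The main obstacle is the NIP unstable upper bound: one has to extract, from NIP alone, a manageable family of definable linear orders whose Dedekind cuts jointly pin down every $\varphi$-type, which is exactly where Shelah's Ramsey-type analysis of indiscernible sequences (alternation ranks, honest definitions, the tree characterisation used in Fact~\ref{fac: equivalent definitions of ded}(4)) intervenes. The finer dichotomy between the two resulting NIP values $\ded\kappa$ and $(\ded\kappa)^{\aleph_{0}}$ requires an additional bookkeeping step to decide when the cuts arising from different formulas can be consolidated into a single linear order, and is the reason the NIP region of the classification produces two distinct functions rather than one.
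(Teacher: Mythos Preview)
The paper does not prove this statement at all: it is recorded as a \emph{Fact} with citations to Keisler and to Shelah's book, and is used only as background motivation. There is therefore no proof in the paper to compare your attempt against.

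As a sketch of the known argument your outline is broadly in the right spirit, but two points are too loose to count as a proof. First, for an NIP formula $\varphi$ it is not true in general that a $\varphi$-type is determined by Dedekind cuts in \emph{definable} linear orders on $M$; the actual bound $|S_\varphi(M)|\leq\ded|M|$ goes through the characterisation of $\ded$ via branches in trees (as in Fact~\ref{fac: equivalent definitions of ded}(4)) together with a VC/alternation argument, not through definable orders extracted from the theory. Second, the dichotomy between $\ded\kappa$ and $(\ded\kappa)^{\aleph_0}$ in the NIP unstable case is not governed by anything like ``a single dominant order''; isolating the correct dividing line here is the delicate part of Keisler's classification and requires a specific combinatorial property (in Keisler's original treatment, a version of the finite cover property), which your sketch does not identify. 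If you want to turn this into a proof you would need to supply that criterion and verify both the upper bound and the matching lower bound in each of the two NIP subcases.
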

In the first part of the paper we prove that $2^{\kappa}\leq\ded\left(\ded\left(\ded\left(\ded\kappa\right)\right)\right)$
holds for any $\kappa$. Our proof uses results from the PCF theory
of the second author. Optimality of this bound remains open. Moreover,
with two extra iterations we can ensure that the supremums are attained.
I.e., for any cardinal $\kappa$ there are linear orders $I_{0},\ldots,I_{6}$
such that $\left|I_{0}\right|\leq\kappa,2^{\kappa}\leq\left|I_{6}\right|$
and for every $i<6$, the number of Dedekind cuts in $I_{i}$ is at
least $\left|I_{i+1}\right|$.

~

In the second part of the paper we apply these results to questions
about cardinal transfer. Fix a complete first-order theory $T$ in
a countable language $L$, with a distinguished predicate $P\left(x\right)$
from $L$. Given two cardinals $\kappa\geq\lambda\geq\aleph_{0}$
we say that $M\models T$ is a $\left(\kappa,\lambda\right)$-model
if $\left|M\right|=\kappa$ and $\left|P\left(M\right)\right|=\lambda$.
A classical question in model theory is to determine implications
between existence of two-cardinal models for different pairs of cardinals.
It was studied by Vaught, Chang, Morley, Shelah and others.
\begin{fact}
(Vaught) Assume that for some $\kappa$, $T$ admits a $\left(\beth_{n}\left(\kappa\right),\kappa\right)$-model
for all $n\in\omega$. Then $T$ admits a $\left(\kappa',\lambda'\right)$-model
for any $\kappa'\geq\lambda'$.
\end{fact}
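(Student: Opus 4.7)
My plan is to apply the Ehrenfeucht--Mostowski method. First, I would Skolemize $T$ to a Skolem theory $T^*$ in an expanded countable language $L^* \supseteq L$; the hypothesis passes to $T^*$, giving models $M_n^* \models T^*$ with $|M_n^*| \geq \beth_n(\kappa)$ and $|P(M_n^*)| = \kappa$ for each $n < \omega$. The goal is to establish the consistency of the $L^* \cup \{c_i : i < \omega\}$-theory $T^{**}$ obtained from $T^*$ by adding the axioms $\neg P(c_i)$ together with the schemas expressing that $(c_i)_{i<\omega}$ is $L^*$-indiscernible over $P$; from such a blueprint, a $(\kappa', \lambda')$-model will follow by standard EM-stretching.

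By compactness, consistency of $T^{**}$ reduces to showing that for each $n < \omega$ and each finite $\Delta \subseteq L^*$, there is a model of $T^*$ containing $n$ elements outside $P$ that are $\Delta$-indiscernible over $P$. To produce these, I would take $m$ large enough that $\beth_m(\kappa) \geq \beth_n(\kappa)^+$ (say $m = n+1$) and work inside $M_m^*$, coloring increasing $n$-tuples from $M_m^* \setminus P(M_m^*)$ by their $\Delta$-type over $P(M_m^*)$. Since $\Delta$ is finite and $|P(M_m^*)| = \kappa$, there are at most $2^\kappa$ colors. The Erd\H{o}s--Rado partition relation $\beth_{n-1}(2^\kappa)^+ \to (\aleph_1)^n_{2^\kappa}$, combined with the identity $\beth_{n-1}(2^\kappa) = \beth_n(\kappa)$ and the bound $|M_m^* \setminus P(M_m^*)| \geq \beth_n(\kappa)^+$, then yields an uncountable $\Delta$-indiscernible sequence over $P$ inside $M_m^* \setminus P(M_m^*)$, as required.

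With the blueprint in hand, the $(\kappa', \lambda')$-model is to be built by an Ehrenfeucht--Mostowski construction: I would take a sequence $(c_i : i \in I)$ of order type $\kappa'$ outside $P$, indiscernible over a base $P$-part of size $\lambda'$, and form the Skolem hull. I expect the main obstacle to be arranging that the resulting model has $|P|$ exactly $\lambda'$ rather than larger. The key point is that if a Skolem term $t(\bar c, \bar p)$ takes a value $q \in P$ for one increasing tuple $\bar c$ from the blueprint (with $\bar p$ from the base $P$), then the formula $t(\bar x, \bar p) = q$ holds of $\bar c$, and by indiscernibility over $P$ it holds of every other increasing tuple $\bar c'$ of the same length; hence $t(\bar c', \bar p) = q$ independently of $\bar c'$, so the $P$-part of the Skolem hull is generated by Skolem terms applied to the $\bar p$'s and finitely many fixed $q$'s alone, and is thus bounded in cardinality by $\lambda'$. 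Once this Vaughtian control on $P$-valued Skolem terms is verified, the construction delivers the required $(\kappa', \lambda')$-model.
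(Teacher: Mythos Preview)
The paper does not prove this statement; it is quoted as a classical fact due to Vaught, so there is no in-paper argument to compare against. Your outline is the standard Ehrenfeucht--Mostowski/Erd\H{o}s--Rado proof and is essentially correct.

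Two small points worth tightening. First, the Erd\H{o}s--Rado step yields a set homogeneous for the colouring of increasing $n$-tuples; to get genuine $\Delta$-indiscernibility (i.e.\ agreement for subtuples of every length $\leq n$) you should either pad all formulas of $\Delta$ with dummy variables up to arity $n$ before colouring, or take the homogeneous set large enough to run the usual interpolation argument. This is routine but should be said. Second, and more importantly, your phrase ``indiscernible over a base $P$-part of size $\lambda'$'' hides the one genuinely delicate step. In the final Skolem hull $M$, the element $q=t(\bar c,\bar p)\in P$ need not lie in your chosen base $P_0$; so to conclude $t(\bar c',\bar p)=q$ you need indiscernibility over the \emph{full} $P$ of the ambient model, not merely over $P_0$. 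This is exactly what your schema in $T^{**}$ provides, since the axioms $\forall\bar y\,(P(\bar y)\to(\varphi(\bar c,\bar y)\leftrightarrow\varphi(\bar c',\bar y)))$ are first-order and hence persist to the stretched model. You should make this explicit: take a model of $T^{**}$ (with $I$-many constants) in which $|P|\geq\lambda'$ (possible by compactness, as $P$ is infinite), pick $P_0\subseteq P$ of size $\lambda'$, and form the Skolem hull of $P_0\cup\{c_i:i\in I\}$; then the schema gives indiscernibility over the whole $P$ of that hull, and your counting argument goes through to give $|P(M)|=\lambda'$.
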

Vaught's theorem is optimal:
\begin{example}
\label{ex: optimality of Vaught}Fix $n\in\omega$, and consider a
structure $M$ in the language $L=\{P_{0}\left(x\right),\ldots,P_{n}\left(x\right),\in_{0},\ldots,\in_{n-1}\}$
in which $P_{0}\left(M\right)=\omega$, $P_{i+1}\left(M\right)$ is
the set of subsets of $P_{i}\left(M\right)$, and $\in_{i}\subseteq P_{i}\times P_{i+1}$
is the membership relation. Let $T=\Th\left(M\right)$. Then $M$
is a $\left(\beth_{n},\aleph_{0}\right)$-model of $T$, but it is
easy to see by ``extensionality'' that for any $M'\models T$ we
have $\left|M'\right|\leq\beth_{n}\left(\left|P_{0}\left(M'\right)\right|\right)$.
\end{example}
However, the theory in the example is wild from the model theoretic
point of view, and stronger transfer principles hold for tame classes
of theories.
\begin{fact}
\label{fac: stable and o-min}
\begin{enumerate}
\item \cite{Lachlan2Card} If $T$ is stable and admits a $\left(\kappa,\lambda\right)$-model
for some $\kappa>\lambda$, then it admits a $\left(\kappa',\lambda'\right)$-model
for any $\kappa'\geq\lambda'$.
\item \cite{Bays2Card} If $T$ is $o$-minimal and admits a $\left(\kappa,\lambda\right)$-model
for some $\kappa>\lambda$, then it admits a $\left(\kappa',\lambda'\right)$-model
for any $\kappa'\geq\lambda'$.
\end{enumerate}
\end{fact}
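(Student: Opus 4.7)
The plan is to handle the two clauses separately, since they are cited to different sources (Lachlan for stable $T$, Bays for $o$-minimal $T$), but both follow the same template: in a tame theory, the space of complete $1$-types over the predicate $P$ is well-behaved enough that one can ``stretch'' an ambient $(\kappa,\lambda)$-model upward without adding points to $P$, and symmetrically one can take prime/Skolem hulls keeping control on $|P|$. Starting from a $(\kappa,\lambda)$-model with $\kappa>\lambda$, I would reduce the problem to two transfers: (a) $(\kappa,\lambda)\rightsquigarrow (\kappa',\lambda)$ for every $\kappa'\geq\lambda$, and (b) $(\lambda,\lambda)\rightsquigarrow (\lambda,\lambda')$ for every $\aleph_{0}\leq\lambda'\leq\lambda$, after which the full conclusion is a routine combination.

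For part (1), the engine is definability of types in stable $T$: every complete type over $P(M)$ is definable over a small parameter set, so there are at most $|P(M)|^{|T|}$ essentially different ways in which a new element can interact with $P$. For the upward step (a), I would build a continuous elementary chain $(M_{i})_{i<\kappa'}$ with $M_{0}=M$, at each successor realizing a global nonforking extension of a type over $M_{i}$ that implies $\lnot P(x)$; the existence of such a type is guaranteed because $M_{i}\setminus P(M_{i})$ is already nonempty and any element there witnesses a consistent $\lnot P$-type over $P(M_{i})$, whose nonforking extension is again outside $P$. Definability of types then ensures that the union has $P$-part of size exactly $\lambda$: no new element can fall into $P$, as that would force it to already lie in some $P(M_{i})$. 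For the downward step (b), a prime model over a prescribed subset of $P(M)$ of size $\lambda'$ provides the required $(\lambda,\lambda')$-model, using that stable theories admit primary models over arbitrary sets.

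For part (2), the argument is the $o$-minimal analogue. Here types over any set $A$ are completely classified by the Dedekind cuts of $A$ in the underlying order (plus realized points), and $P$ carries a definable linear order that is necessarily dense without endpoints in any sufficiently large model. Given a $(\kappa,\lambda)$-model $M$, I would perform step (a) by iteratively realizing irrational cuts in $P(M_{i})$: each such realization is new and, by $o$-minimality together with the fact that $P$ is a definable set cut out by a formula which is itself a finite union of intervals and points, lies outside $P$. Step (b) is carried out by taking the definable (Skolem) closure of a dense subset of $P(M)$ of size $\lambda'$, exploiting definable Skolem functions in $o$-minimal theories.

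The main obstacle in both proofs, and the essential reason the Vaught transfer can be strengthened in these tame contexts, is verifying that the newly added elements genuinely lie \emph{outside} $P$ at every stage. Stability provides this via the interplay between definability of types and nonforking; $o$-minimality provides it via the abundance of irrational cuts in any infinite definable linear order. Once these single-step non-absorption results are established, the rest is an induction on $\kappa'$ and $\lambda'$, combined with Vaught's theorem to cover the gap behavior.
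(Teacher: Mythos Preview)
The paper does not give a proof of this statement: it is recorded as a \emph{Fact} with citations to \cite{Lachlan2Card} and \cite{Bays2Card}, and no argument is supplied. There is therefore nothing in the paper to compare your proposal against.

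That said, your sketch has genuine gaps in both parts. In the stable case, the sentence ``definability of types then ensures that the union has $P$-part of size exactly $\lambda$: no new element can fall into $P$, as that would force it to already lie in some $P(M_{i})$'' is not a valid inference; realizing a single nonforking type in an elementary extension does not by itself prevent $P$ from growing in that extension. What is actually needed is the existence of a Vaught pair $M\prec N$ with $P(M)=P(N)$ and $M\neq N$, and then a stability-based iteration showing such pairs can be stretched; your chain construction does not establish this. Your downward step also invokes prime models over arbitrary sets, which is available in totally transcendental theories but not in all stable ones.

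In the $o$-minimal case the problem is sharper. You propose to enlarge $M\setminus P(M)$ by ``realizing irrational cuts in $P(M_{i})$'' and claim the new points lie outside $P$ because $P$ is a finite union of intervals and points. But a cut of $P(M_{i})$ that sits inside one of those intervals is realized by an element \emph{in} $P$, not outside it; so this step does not produce elements of $\neg P$. Bays's argument is considerably more delicate and uses the fine structure theory of $o$-minimal models (in particular the analysis of definable families and the behaviour of cuts relative to the decomposition of $P$), not just the cell decomposition of $P$ itself.
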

For further two-cardinal results for stable theories see \cite[Ch. V, §6]{MR1083551}
and also \cite{BerensteinShami}.

An important class of theories containing both the stable and the
o-minimal theories is the class of \emph{dependent} theories (also
called NIP theories in the literature) introduced by the second author
\cite{MR1083551}. In the countable case, dependent theories can be
defined as those theories for which $f_{T}\left(\kappa\right)\leq\left(\ded\kappa\right)^{\aleph_{0}}$
(see Fact \ref{fac: 6 stab functions}, and see Section \ref{sec: no beth_omega for dependent T}
for a combinatorial definition). Recently dependent theories have
attracted a lot of attention both in purely model theoretic work on
generalizing the machinery of stable theories (e.g. \cite{ShelahDependentCont,900,950,ExtDefI,ExtDef2}),
and due to the analysis of some important algebraic examples \cite{HruPi,HHM}.

It is easy to see that the theory in Example \ref{ex: optimality of Vaught}
is not dependent, but also that a complete analogue of Fact \ref{fac: stable and o-min}
cannot hold for dependent theories: consider the theory of $\left(\mathbb{R},<\right)$
expanded by a predicate naming $\mathbb{Q}$. In Section \ref{sec: no beth_omega for dependent T}
we show that in fact the situation for dependent theories is not better
than for arbitrary theories, in contrast to the stable and o-minimal
cases. Namely, for every $n<\omega$ we construct a \emph{dependent}
theory $T_{n}$ which has a $(\beth_{m},\aleph_{0})$-model for all
$m<n$, but does not have a $\left(\beth_{\omega},\aleph_{0}\right)$-model.
In Section \ref{sec: Hanf number for omitting types} we elaborate
on this example and show that the Hanf number for omitting a type
is again the same for countable dependent theories as for arbitrary
theories --- unlike in the stable \cite{HrushovskiShelah} and in
the $o$-minimal \cite{Marker} cases. Examples which we construct
add to the list of dependent theories \cite{Sh946,Sh975} demonstrating
that the principle ``dependent = stable + linear order'' has only
limited applicability.

\section{\label{sec: on the number of dedekind cuts}On the number of Dedekind
cuts}

\subsection{On $\pp_{\kappa}\left(\lambda\right)$}

We summarize some facts from the PCF theory of the second author (see
also \cite[Chapter 9]{IntroToCardArithm} for an exposition).
\begin{defn}
\label{def: leq plus}Given a set of cardinals $A$ and a cardinal
$\lambda$, we will write $\sup^{+}\left(A\right)=\min\{\mu:\forall\nu\in A,\nu<\mu\}$
and $\lambda\leq^{+}\sup\left(A\right)$ if either $\lambda<\sup\left(A\right)$,
or $\lambda=\sup\left(A\right)$ and $\lambda\in A$.
\end{defn}

\begin{defn}
\cite[II.§1]{ShCardinalArithmetic} For $\cof\lambda\leq\kappa<\lambda$
let 
\[
A=\left\{ \cof\left(\prod a/\mathcal{F}\right)\,:\, a\subset\Reg\land\sup\left(a\right)=\lambda\land\left|a\right|\leq\kappa\land\mathcal{F}\mbox{ is an ultrafilter on }a\land\mathcal{F}\cap I_{b}\left(a\right)=\emptyset\right\} \mbox{,}
\]

where $\Reg$ is the class of regular cardinals, and for a set $B$
of ordinals with $\sup\left(B\right)\notin B$, $I_{b}\left(B\right)=\left\{ X\subseteq B:\exists\beta\in B\, X\subseteq\beta\right\} $
denotes the ideal of bounded subsets of $B$. Then we define $\pp_{\kappa}\left(\lambda\right)=\sup\left(A\right)$
and $\pp_{\kappa}^{+}\left(\lambda\right)=\sup^{+}\left(A\right)$
(where ``$\pp$'' stands for ``pseudo-power'').
\end{defn}
Equivalently (see e.g \cite[Lemma 9.1.1]{IntroToCardArithm}), for
$\cof\lambda\leq\kappa<\lambda$ one has 
\[
\pp_{\kappa}\left(\lambda\right)=\sup\left\{ \tcof\left(\prod_{i<\kappa}\lambda_{i}/I,<_{I}\right):\lambda_{i}=\cof\lambda_{i}<\lambda=\sup_{i<\kappa}\lambda_{i}\land I\mbox{ is an ideal on }\kappa\land I_{b}\left(\kappa\right)\subseteq I\right\} \mbox{,}
\]
where $<_{I}$ is the lexicographic ordering modulo $I$ and for a
partial order $P$, $\tcof\left(P\right)=\kappa$ when there are $\left\langle p_{i}:i<\kappa\right\rangle $
in $P$ such that $\kappa=\cof\kappa$ and $\bigwedge_{i<j}\left(p_{i}<p_{j}\right)$
and $\forall p\in P\left(\bigvee_{i<\kappa}p\leq p_{i}\right)$ (true
cofinality may not exist). We recall that $\Gamma\left(\theta,\sigma\right)=\left\{ I:\mbox{for some cardinal }\theta_{I}<\theta,\, I\mbox{ is a \ensuremath{\sigma}-complete ideal on \ensuremath{\theta_{I}}}\right\} $
and $\Gamma\left(\theta\right)=\Gamma\left(\theta^{+},\theta\right)$.
Then $\pp_{\Gamma\left(\theta,\sigma\right)}\left(\lambda\right)$
is defined in the same way as $\pp_{\kappa}\left(\lambda\right)$
but the supremum is taken only over ideals from $\Gamma\left(\theta,\sigma\right)$.
\begin{fact}
\label{fac: general properties of pp} See e.g. \cite[Chapter 9]{IntroToCardArithm}:
\begin{enumerate}
\item $\lambda<\pp_{\kappa}\left(\lambda\right)\leq\lambda^{\kappa}$ and
if $\cof\lambda=\kappa>\aleph_{0}$ and $\lambda$ is $\kappa$-strong
(i.e. $\rho^{\kappa}<\lambda$ for all $\rho<\lambda$), then $\pp_{\kappa}\left(\lambda\right)=\lambda^{\kappa}$.
In particular $\pp_{\kappa}\left(\lambda\right)=\lambda^{\kappa}$
holds for any strong limit $\lambda$ with uncountable cofinality
$\kappa$.
\item For any $\theta$ we have $\pp_{\Gamma\left(\theta\right)}\left(\lambda\right)\leq\pp_{\theta}\left(\lambda\right)$
and $\pp_{\Gamma\left(\theta^{+},2\right)}\left(\lambda\right)=\pp_{\theta}\left(\lambda\right)$.
\end{enumerate}
\end{fact}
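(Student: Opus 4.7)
My plan is to treat the two items separately, importing the harder PCF inputs from \cite[Chapter 9]{IntroToCardArithm} as black boxes rather than reproving them.

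For (1), the chain $\lambda<\pp_\kappa(\lambda)\leq\lambda^\kappa$ splits into two easy parts and one hard part. The upper bound is a cardinality count: for $a\subseteq\Reg$ with $|a|\leq\kappa$ and $\sup a=\lambda$, one has $|\prod a|\leq\lambda^\kappa$, and an ultrafilter quotient of $\prod a$ is linearly ordered, so its cofinality is bounded by its cardinality. The strict lower bound $\lambda<\pp_\kappa(\lambda)$ is a standard PCF observation: pick a strictly increasing sequence $\langle\lambda_i:i<\cof\lambda\rangle$ of regulars cofinal in $\lambda$, let $a$ be its range, and take any ultrafilter $\mathcal{F}$ on $a$ disjoint from $I_b(a)$; the basic PCF bound on true cofinalities of such products then yields $\cof(\prod a/\mathcal{F})>\lambda$, using that the ``constant-like'' functions $g_\beta(\lambda_i)=\min(\beta,\lambda_i)$ for $\beta<\lambda$ already give a strictly increasing chain modulo $\mathcal{F}$ while every bounded piece of the quotient is small. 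The remaining equality $\pp_\kappa(\lambda)=\lambda^\kappa$ under the $\kappa$-strong hypothesis with $\cof\lambda=\kappa>\aleph_0$ is the substantive content of Shelah's cov $=$ pp theorem, which I would simply quote. The ``in particular'' clause for strong limit $\lambda$ then follows because such $\lambda$ is $\mu$-strong for every $\mu<\lambda$.

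For (2), the inequality $\pp_{\Gamma(\theta)}(\lambda)\leq\pp_\theta(\lambda)$ is pure definition chasing. Unwinding, $\Gamma(\theta)=\Gamma(\theta^+,\theta)$ is the class of $\theta$-complete ideals on sets of size $\leq\theta$; this is a subclass of the ideals used in the supremum defining $\pp_\theta(\lambda)$ (which imposes no completeness condition beyond containing $I_b$), so the supremum is no larger. For the equality $\pp_{\Gamma(\theta^+,2)}(\lambda)=\pp_\theta(\lambda)$, observe that $2$-completeness is vacuous, so $\Gamma(\theta^+,2)$ is exactly the family of ideals on sets of size $\leq\theta$, matching the family used in $\pp_\theta(\lambda)$ term by term.

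The main obstacle is the second half of (1), namely the lower bound $\lambda^\kappa\leq\pp_\kappa(\lambda)$ when $\lambda$ is $\kappa$-strong with $\cof\lambda=\kappa>\aleph_0$. This is a genuine theorem of Shelah's PCF theory and will not admit a short self-contained proof; my intention is to import it verbatim from \cite[Chapter 9]{IntroToCardArithm}. Everything else in the statement is either elementary cardinal arithmetic or a rearrangement of the definitions of $\pp$ and $\Gamma$.
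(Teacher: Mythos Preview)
Your proposal is correct and, in fact, goes further than the paper does: the statement is recorded as a \emph{Fact} with a bare citation to \cite[Chapter 9]{IntroToCardArithm} and no proof is given in the paper at all. Your plan to import the hard direction of (1) (the equality $\pp_\kappa(\lambda)=\lambda^\kappa$ under the $\kappa$-strong hypothesis) as a black box from that reference is therefore exactly what the paper does, and your sketches for the remaining elementary parts---the cardinality bound $\pp_\kappa(\lambda)\leq\lambda^\kappa$, the strict lower bound via a diagonalization against any putative cofinal family of size $<\lambda$ (noting $\lambda$ is singular so the cofinality cannot equal $\lambda$), and the definition-chasing for (2)---are all sound. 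One very minor sharpening: your argument for $\lambda<\pp_\kappa(\lambda)$ is cleaner if phrased as ``if $\{f_\alpha:\alpha<\mu\}$ were cofinal with $\mu<\lambda$ regular, then on a cobounded set of $i$ we have $\lambda_i>\mu$ regular, hence $g(\lambda_i)=\sup_{\alpha<\mu}f_\alpha(\lambda_i)+1<\lambda_i$ dominates every $f_\alpha$ modulo the filter,'' rather than via the $g_\beta$'s, since a long increasing chain alone does not bound cofinality from below.
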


\begin{fact}
\label{fac: PCF} 

\begin{enumerate}
\item \label{enu: finding inaccessible mu}\cite[4.3]{Sh410} Assume:

\begin{itemize}
\item $\lambda$ is regular, uncountable,
\item $\kappa<\lambda$ implies $2^{\kappa}<2^{\lambda}$,
\item for some regular $\chi\leq2^{\lambda}$ there is no tree of cardinality
$\lambda$ with $\geq\chi$-many branches of length $\lambda$.
\end{itemize}

Then $2^{<\lambda}<2^{\leq\lambda}$, and for some $\mu\in\left(\lambda,2^{<\lambda}\right]$
with $\cof\mu=\lambda$:
\begin{enumerate}
\item for every regular $\chi$ in $\left(2^{<\lambda},2^{\lambda}\right]$
there is a linear order of cardinality $\chi$ with a dense subset
of cardinality $\mu$ (the linear order is $\left(T_{\chi},<_{\mbox{lx}}\right)$,
where $T_{\chi}\subseteq2^{<\mu}$ has $\leq\mu$ nodes and $\geq\chi$-many
branches of length $\lambda$),
\item $\pp_{\Gamma\left(\lambda\right)}\left(\mu\right)=2^{\lambda}$, 
\item $\mu$ is $\left(\lambda,\lambda^{+},2\right)$-inaccessible, i.e.
(see \cite[3.2]{Sh410}) for any $\mu'$ such that $\lambda<\mu'<\mu\,\land\,\mbox{\ensuremath{\cof}}\mu'\leq\lambda$
we have $\pp_{\Gamma\left(\lambda^{+},2\right)}\left(\mu'\right)<\mu$,
which in view of Fact \ref{fac: general properties of pp} implies
$\pp_{\lambda}\left(\mu'\right)<\mu$.
\end{enumerate}
\item \label{enu: finding a sequence of mu's}\cite[Claim 3.4]{Sh430} Assume
that $\theta_{n+1}=\min\left\{ \theta\,:\,2^{\theta}>2^{\theta_{n}}\right\} $
for $n<\omega$ and $\sum_{n<\omega}\theta_{n}<2^{\theta_{0}}$ (so
$\theta_{n+1}$ is regular, $\theta_{n+1}>\theta_{n}$). Then for
infinitely many $n<\omega$, for some $\mu_{n}\in\left[\theta_{n},\theta_{n+1}\right)$
(so $2^{\mu_{n}}=2^{\theta_{n}}$) we have: for every regular $\chi\leq2^{\theta_{n}}$
there is a tree of cardinality $\mu_{n}$ with $\geq\chi$-many branches
of length $\theta_{n}$.
\item \label{enu: inverting pp's}\cite[II.2.3(2)]{ShCardinalArithmetic}
If $\lambda<\mu$ are singulars of cofinality $\leq\kappa$ (and $\kappa<\lambda$)
and $\pp_{\kappa}\left(\lambda\right)\geq\mu$ then $\pp_{\kappa}\left(\mu\right)\leq^{+}\pp_{\kappa}\left(\lambda\right)$.
\end{enumerate}
\end{fact}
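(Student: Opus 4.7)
The three clauses of Fact~\ref{fac: PCF} pull on different strands of Shelah's PCF machinery, and I would handle them separately.

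For clause~(\ref{enu: finding inaccessible mu}), note first that the inequality $2^{<\lambda}<2^{\leq\lambda}=2^{\lambda}$ is immediate from the jump hypothesis and K\"onig's theorem: otherwise $\cof(2^{\lambda})\leq\lambda$, contradicting regularity of $\lambda$. The substantive content is locating $\mu\in(\lambda,2^{<\lambda}]$ with $\cof\mu=\lambda$ and $\pp_{\Gamma(\lambda)}(\mu)=2^{\lambda}$. The plan is to run a PCF search on $(\lambda,2^{<\lambda}]$: if no such $\mu$ existed, then Shelah's ``cov versus pp'' inequality together with a standard no-hole argument would produce a tree of cardinality $\lambda$ with at least $\chi$-many branches of length $\lambda$, contradicting the third hypothesis. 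Once $\mu$ is pinned down, the lexicographic order on the canonical tree $T_{\chi}\subseteq 2^{<\mu}$ realizes clause~(a), clause~(b) is the defining property of $\mu$, and the $(\lambda,\lambda^{+},2)$-inaccessibility in~(c) is enforced by choosing $\mu$ minimal with pp-value $2^{\lambda}$.

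For clause~(\ref{enu: finding a sequence of mu's}), the plan is to iterate clause~(\ref{enu: finding inaccessible mu}) at each regular $\theta_{n+1}$, whose defining minimality automatically supplies the jump hypothesis $\nu<\theta_{n+1}\Rightarrow 2^{\nu}<2^{\theta_{n+1}}$. Suppose for contradiction that for all but finitely many $n$ there is no $\mu_{n}\in[\theta_{n},\theta_{n+1})$ as required. Then clause~(\ref{enu: finding inaccessible mu}) furnishes cardinals $\mu'_{n}\in(\theta_{n+1},2^{\theta_{n}}]$ of cofinality $\theta_{n+1}$ with $\pp_{\Gamma(\theta_{n+1})}(\mu'_{n})=2^{\theta_{n+1}}$ and strong pp-inaccessibility. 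The bound $\sum_{n<\omega}\theta_{n}<2^{\theta_{0}}$ forces the $\mu'_{n}$ to accumulate inside a bounded segment below $2^{\theta_{0}}$; applying Fact~\ref{fac: general properties of pp} together with clause~(\ref{enu: inverting pp's}) below to transport pp-values between the $\mu'_{n}$ yields a contradiction by a cardinal-arithmetic count of available regular cofinalities.

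For clause~(\ref{enu: inverting pp's}), the approach is transitivity of pcf. Fix a witnessing sequence $\langle\lambda_{i}:i<\kappa\rangle$ of regulars with supremum $\lambda$ and an ideal $I\supseteq I_{b}(\kappa)$ such that $\tcof(\prod_{i}\lambda_{i}/I)\geq\mu$. For any sequence $\langle\mu_{j}:j<\kappa\rangle$ of regulars cofinal in $\mu$ and any ideal $J\supseteq I_{b}(\kappa)$ on $\kappa$, each $\mu_{j}$ itself lies below $\pp_{\kappa}(\lambda)$, hence can be represented as $\tcof$ of a suitable sub-product of the $\lambda_{i}$'s modulo a derived ideal. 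Composing these representations into a double product gives a bound on $\tcof(\prod_{j}\mu_{j}/J)$ in terms of the original tcf witnessing $\pp_{\kappa}(\lambda)$, and Shelah's localization theorem from \cite[II.2]{ShCardinalArithmetic} converts this into the inequality $\leq^{+}\pp_{\kappa}(\lambda)$, the ``$+$'' recording the marginal case where the supremum is attained. Taking supremum over $\langle\mu_{j}\rangle$ and $J$ yields the desired bound on $\pp_{\kappa}(\mu)$.

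The principal obstacle across all three clauses is running the full PCF apparatus---no-hole lemmas, cov-versus-pp comparisons, and localization theorems---cleanly in the delicate regime where the sharper $\leq^{+}$ (rather than $\leq$) must be tracked and where the supremum defining $\pp$ need not be attained. In practice these Facts are imported wholesale from \cite{ShCardinalArithmetic,Sh410,Sh430} rather than reproved, so the sketch above is best read as an orientation to the relevant segments of those sources.
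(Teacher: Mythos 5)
This statement is labelled a \emph{Fact}, and the paper supplies no proof of it: all three clauses are imported verbatim from the literature, with citations to \cite[4.3]{Sh410}, \cite[Claim 3.4]{Sh430} and \cite[II.2.3(2)]{ShCardinalArithmetic}. Your closing remark---that these results are used as black boxes rather than reproved---is therefore exactly the paper's stance, and in the only sense the paper tests, your proposal is correct. Where your sketches can be checked against the paper, they also track the sources faithfully: the K\"onig-theorem argument for $2^{<\lambda}<2^{\leq\lambda}$ is sound (if $2^{<\lambda}=2^{\lambda}$, the jump hypothesis makes $2^{\lambda}$ a supremum of $\leq\lambda$ strictly smaller cardinals, so $\cof\left(2^{\lambda}\right)\leq\lambda$, contradicting K\"onig), and your outline of clause (\ref{enu: finding a sequence of mu's})---apply clause (\ref{enu: finding inaccessible mu}) at each $\theta_{n+1}$, whose defining minimality supplies the jump hypothesis, wherever the tree property fails, then play the resulting $\mu$'s against each other via clause (\ref{enu: inverting pp's})---is precisely the pattern the paper itself reruns in Proposition \ref{prop: two ded iterations}, whose proof explicitly ``follows the proof of \cite[Claim 3.4]{Sh430}.''

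One inaccuracy in your clause (\ref{enu: finding a sequence of mu's}) sketch is worth flagging, since the paper's Proposition \ref{prop: two ded iterations} shows how the endgame actually runs. The hypothesis $\sum_{n<\omega}\theta_{n}<2^{\theta_{0}}$ does not force the $\mu_{n}$'s ``into a bounded segment below $2^{\theta_{0}}$,'' and the contradiction is not a ``cardinal-arithmetic count of available regular cofinalities.'' Rather, one splits the bad indices according to whether $\mu_{n}<2^{\theta_{0}}$ or $\mu_{n}\geq2^{\theta_{0}}$: in the first case clause (\ref{enu: finding inaccessible mu})(a) already yields the desired dense linear orders (this is the paper's $\left(*\right)_{2}$), while in the second case the hypothesis guarantees $\theta_{n+1}<2^{\theta_{0}}\leq\mu_{n}$, so that clause (\ref{enu: inverting pp's}) together with the inaccessibility clause (\ref{enu: finding inaccessible mu})(c) forces $\mu_{n}>\mu_{n+1}$ for consecutive such indices (the paper's $\left(*\right)_{3}$); the contradiction is then well-foundedness---an infinite strictly descending sequence of cardinals in the Sh430 setting, or in the paper's finite variant $\mu_{k}<\mu_{1}\leq2^{\theta_{0}}$ against $\mu_{k}\geq2^{\theta_{0}}$. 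Since your proposal explicitly presents itself as orientation to the cited sources rather than a self-contained proof, this is a misdescription of the mechanism rather than a gap in the paper's sense, but it is the one point where your sketch and the actual argument diverge.
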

\begin{rem}
See \cite{GitikShelah} concerning optimality of these results.
\end{rem}

\subsection{Bounding exponent by iterated $\ded$}
\begin{defn}
By induction on the ordinal $\alpha$ we define a strictly increasing
sequence of ordinals $\gimel_{\alpha}$ such that:
\begin{itemize}
\item If $\alpha=0$, then $\gimel_{\alpha}=\aleph_{0}$.
\item If $\alpha=\beta+1$, then $\gimel_{\alpha}=\min\left\{ \gimel:2^{\gimel}>2^{\gimel_{\beta}}\right\} $.
\item If $\alpha$ is limit, then $\gimel_{\alpha}=\sum\left\{ \gimel_{\beta}:\beta<\alpha\right\} $.
\end{itemize}
\end{defn}
\begin{lem}
\label{lem: exp vs ded on gimels}For any ordinal $\alpha$, $2^{\gimel_{\alpha+1}}\leq^{+}\ded\left(2^{\gimel_{\alpha}}\right)$.\end{lem}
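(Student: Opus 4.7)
The plan is to set $\lambda := \gimel_{\alpha+1}$ and $\theta := \gimel_\alpha$, and to reduce the claim to an application of Fact \ref{fac: PCF}(\ref{enu: finding inaccessible mu}) with this $\lambda$.

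First I would extract the basic structural properties of $\lambda$ from its definition as the least cardinal with $2^{\gimel} > 2^\theta$: for every $\kappa$ with $\theta \leq \kappa < \lambda$ one has $2^\kappa = 2^\theta$, hence $2^{<\lambda} = 2^\theta$ while $2^\lambda > 2^\theta$. A standard argument (if $\lambda$ were singular, $2^\lambda$ would already be determined by $2^{<\lambda}$) shows that $\lambda$ is regular, and $\lambda$ is uncountable because the sequence $\gimel_\beta$ is strictly increasing from $\gimel_0 = \aleph_0$. The decisive observation is $\lambda \leq 2^\theta$: since $2^{2^\theta} > 2^\theta$ by Cantor, the cardinal $2^\theta$ itself lies in the set defining $\gimel_{\alpha+1}$.

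With these properties in hand I would invoke Fact \ref{fac: PCF}(\ref{enu: finding inaccessible mu}); its first two hypotheses hold automatically, and the analysis splits on whether the third (tree) hypothesis holds. If it does, then clause (a) of Fact \ref{fac: PCF}(\ref{enu: finding inaccessible mu}) produces, for every regular $\chi \in (2^{<\lambda}, 2^\lambda]$, a linear order of cardinality $\chi$ with a dense subset of size $\mu \leq 2^{<\lambda} = 2^\theta$, which is exactly a witness placing $\chi$ in the set defining $\ded(2^\theta)$. If the third hypothesis fails, then for every regular $\chi \leq 2^\lambda$ there is a tree of cardinality $\lambda$ with at least $\chi$ branches of length $\lambda$; since $\lambda \leq 2^\theta$, such a tree has cardinality $\leq 2^\theta$, and Fact \ref{fac: equivalent definitions of ded}(4) yields $\chi \leq \ded(2^\theta)$.

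Combining the two cases, every regular $\chi \leq 2^\lambda$ is at most $\ded(2^\theta)$ and is realized in the appropriate defining set, so taking the supremum gives $2^\lambda \leq \ded(2^\theta)$. To upgrade this to $\leq^+$, I would handle the case $2^\lambda$ regular directly by taking $\chi = 2^\lambda$ (which is then realized), and in the singular case argue that either $2^\lambda < \ded(2^\theta)$ outright, or else $2^\lambda$ is realized as the cardinality of a concatenation of the linear-order witnesses for cofinally many regular $\chi < 2^\lambda$, whose combined dense subset has size $\leq \cof(2^\lambda) \cdot 2^\theta \leq 2^\theta$ after a careful PCF-type count. I expect the main obstacle to be precisely this last attainment bookkeeping in the singular subcase; the core dichotomy itself is handled cleanly by Fact \ref{fac: PCF}(\ref{enu: finding inaccessible mu}) once the inequality $\lambda \leq 2^\theta$ is secured.
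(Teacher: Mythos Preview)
Your approach takes a substantial detour through Fact~\ref{fac: PCF}(\ref{enu: finding inaccessible mu}) where none is needed, and the detour creates the very gap you flag at the end. The inequality $\cof(2^{\lambda})\leq 2^{\theta}$ that your concatenation argument requires can genuinely fail: for instance, under $2^{\aleph_0}=\aleph_1$ one has $\theta=\gimel_0=\aleph_0$, $2^{\theta}=\aleph_1$, hence $\lambda=\gimel_1=\aleph_1$, and it is consistent that $2^{\aleph_1}$ is singular of cofinality strictly greater than $\aleph_1=2^{\theta}$. In that situation your concatenated dense set has size $\cof(2^{\lambda})\cdot 2^{\theta}>2^{\theta}$, so the $\leq^{+}$ upgrade in the singular case is not secured by your sketch, and no ``careful PCF-type count'' will repair it.

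The paper's proof is entirely elementary and sidesteps all of this. You have already isolated the two key facts: $\gimel_{\alpha+1}\leq 2^{\gimel_{\alpha}}$ and $2^{\beta}\leq 2^{\gimel_{\alpha}}$ for every $\beta<\gimel_{\alpha+1}$. These say precisely that the full binary tree $2^{<\gimel_{\alpha+1}}$ has at most $\gimel_{\alpha+1}\cdot 2^{\gimel_{\alpha}}=2^{\gimel_{\alpha}}$ nodes. It has exactly $2^{\gimel_{\alpha+1}}$ branches of length $\gimel_{\alpha+1}$ (which you correctly observed is regular). By Fact~\ref{fac: equivalent definitions of ded}(4) this single tree already witnesses that $2^{\gimel_{\alpha+1}}$ lies in the set whose supremum defines $\ded(2^{\gimel_{\alpha}})$, giving $\leq^{+}$ immediately---no dichotomy, no attainment bookkeeping, no PCF. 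Fact~\ref{fac: PCF}(\ref{enu: finding inaccessible mu}) is reserved in the paper for the genuinely harder Proposition~\ref{prop: two ded iterations}.
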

\begin{proof}
$2^{<\gimel_{\alpha+1}}$ is a tree with $2^{\gimel_{\alpha+1}}$
branches and $\leq\sum\left\{ 2^{\left|\beta\right|}:\beta<\gimel_{\alpha+1}\right\} $
nodes. But if $\beta<\gimel_{\alpha+1}$, then $2^{\beta}\leq2^{\gimel_{\alpha}}$
and $\gimel_{\alpha+1}\leq2^{\gimel_{\alpha}}$ by the definition
of $\gimel$'s, so the number of nodes is bounded by $2^{\gimel_{\alpha}}$.\end{proof}
\begin{prop}
\label{prop: two ded iterations} Assume that $\gimel_{\alpha+k}\leq2^{\gimel_{\alpha}}$
for some $k\in\omega$. Then for some $m\leq k$:
\begin{itemize}
\item $\ded\left(2^{\gimel_{\alpha}}\right)\geq2^{\gimel_{\alpha+m}}$,
\item $\ded\left(2^{\gimel_{\alpha+m}}\right)\geq2^{\gimel_{\alpha+k}}$.
\end{itemize}
\end{prop}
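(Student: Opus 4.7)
The plan is to choose $m:=m^{*}:=\max\{j\leq k\,:\,\ded(2^{\gimel_{\alpha}})\geq2^{\gimel_{\alpha+j}}\}$, which is well-defined since $j=0$ always satisfies the defining condition (using $\ded\kappa>\kappa$). The first required inequality then holds by construction, and the second is trivial when $m^{*}=k$. So the genuinely nontrivial case is $m^{*}<k$: by maximality of $m^{*}$ we have $\ded(2^{\gimel_{\alpha}})<2^{\gimel_{\alpha+m^{*}+1}}$, and the remaining task is to establish $\ded(2^{\gimel_{\alpha+m^{*}}})\geq2^{\gimel_{\alpha+k}}$.

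For this step I would apply Fact~\ref{fac: PCF}(\ref{enu: finding inaccessible mu}) to $\lambda:=\gimel_{\alpha+k}$. Regularity and uncountability of $\lambda$ hold because $k\geq1$ (so $\lambda$ is a successor in the $\gimel$-sequence), and $2^{\kappa}<2^{\lambda}$ for every $\kappa<\lambda$ is built into the very definition of $\gimel_{\alpha+k}$. The remaining hypothesis --- existence of a regular $\chi\leq2^{\lambda}$ with no tree of cardinality $\lambda$ carrying $\geq\chi$ branches of length $\lambda$ --- is forced by the maximality of $m^{*}$: if it failed, Fact~\ref{fac: equivalent definitions of ded}(4) would give $\ded(\gimel_{\alpha+k})\geq2^{\gimel_{\alpha+k}}$, and combined with $\gimel_{\alpha+k}\leq2^{\gimel_{\alpha}}$ this forces $\ded(2^{\gimel_{\alpha}})\geq2^{\gimel_{\alpha+k}}\geq2^{\gimel_{\alpha+m^{*}+1}}$, contradicting the choice of $m^{*}$. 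This yields some $\mu\in(\gimel_{\alpha+k},2^{<\gimel_{\alpha+k}}]$ of cofinality $\gimel_{\alpha+k}$ with $\pp_{\Gamma(\gimel_{\alpha+k})}(\mu)=2^{\gimel_{\alpha+k}}$, with $(\gimel_{\alpha+k},\gimel_{\alpha+k}^{+},2)$-inaccessibility, and crucially with $\ded(\mu)\geq2^{\gimel_{\alpha+k}}$. Since any $\kappa<\gimel_{\alpha+k}$ has $2^{\kappa}\leq2^{\gimel_{\alpha+k-1}}$, one gets $\mu\leq2^{\gimel_{\alpha+k-1}}$.

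When $m^{*}=k-1$ this already closes the case: $\mu\leq2^{\gimel_{\alpha+m^{*}}}$ and monotonicity of $\ded$ gives $\ded(2^{\gimel_{\alpha+m^{*}}})\geq\ded(\mu)\geq2^{\gimel_{\alpha+k}}$. The main obstacle is the subcase $m^{*}<k-1$, where a priori $\mu$ may overshoot $2^{\gimel_{\alpha+m^{*}}}$. Here my plan is to apply Fact~\ref{fac: PCF}(\ref{enu: finding inaccessible mu}) also at each intermediate level $\lambda_{j}:=\gimel_{\alpha+j}$ for $m^{*}<j\leq k$ --- the tree hypothesis is again forced by the maximality of $m^{*}$ by exactly the same argument --- obtaining $\mu_{j}\leq2^{\gimel_{\alpha+j-1}}$ with $\ded(\mu_{j})\geq2^{\gimel_{\alpha+j}}$, and to then couple the $(\lambda_{j},\lambda_{j}^{+},2)$-inaccessibility of the $\mu_{j}$ with the pp-inversion Fact~\ref{fac: PCF}(\ref{enu: inverting pp's}): any intermediate singular $\mu'\in(2^{\gimel_{\alpha+m^{*}}},\mu)$ of cofinality $\leq\gimel_{\alpha+k}$ would, via pp-inversion applied to the witnesses supplied by the $\mu_{j}$, produce a $\pp_{\gimel_{\alpha+k}}$-chain contradicting the inaccessibility of the top-level $\mu$. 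This coordinated PCF bookkeeping across the levels $m^{*}<j\leq k$ --- rather than the choice of $m^{*}$, which is routine --- is where I anticipate the bulk of the technical work.
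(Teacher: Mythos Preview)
Your approach is essentially the paper's, with a cosmetic difference in how $m$ is selected: the paper defines $S_{0}=\{n:(*)_{\theta_{n}}\text{ fails}\}$, produces $\mu_{n}$ for $n\in S_{0}$ via Fact~\ref{fac: PCF}(\ref{enu: finding inaccessible mu}), sets $S_{1}=\{n\in S_{0}:\mu_{n}\geq 2^{\gimel_{\alpha}}\}$, and takes $m=\max\{n\leq k:n\notin S_{1}\}$. Your $m^{*}$ may be larger than this $m$, but that is harmless. Your verification that Fact~\ref{fac: PCF}(\ref{enu: finding inaccessible mu}) applies at every level $j>m^{*}$ is exactly the paper's $(*)_{1}$ (combined with your maximality argument).

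The one place where your outline is not yet a proof is the final paragraph. What actually closes the argument is not a statement about an \emph{arbitrary} singular $\mu'\in(2^{\gimel_{\alpha+m^{*}}},\mu_{k})$, but two concrete observations about the specific cardinals $\mu_{j}$ you have produced. First, for each $j>m^{*}$ one has $\mu_{j}\geq 2^{\gimel_{\alpha}}$: otherwise $\ded(2^{\gimel_{\alpha}})\geq\ded(\mu_{j})\geq 2^{\gimel_{\alpha+j}}$ by your $(\delta)$-type clause, contradicting maximality of $m^{*}$. In particular $\mu_{j}>\gimel_{\alpha+k}$ for all such $j$, which is what lets the inaccessibility clause bite. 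Second (this is the paper's $(*)_{3}$), for consecutive $j,j+1>m^{*}$ one shows $\mu_{j}>\mu_{j+1}$: if instead $\mu_{j}<\mu_{j+1}$, then $\gimel_{\alpha+j+1}<\mu_{j}<\mu_{j+1}$ with $\cof\mu_{j}=\gimel_{\alpha+j}\leq\gimel_{\alpha+j+1}$, so the $(\gimel_{\alpha+j+1},\gimel_{\alpha+j+1}^{+},2)$-inaccessibility of $\mu_{j+1}$ forces $\pp_{\gimel_{\alpha+j+1}}(\mu_{j})<\mu_{j+1}$, while $\pp_{\gimel_{\alpha+j+1}}(\mu_{j})\geq\pp_{\Gamma(\gimel_{\alpha+j})}(\mu_{j})=2^{\gimel_{\alpha+j}}\geq\mu_{j+1}$ --- contradiction. (The pp-inversion Fact~\ref{fac: PCF}(\ref{enu: inverting pp's}) is invoked in the paper but is not actually needed for this step.) Chaining gives $\mu_{k}<\mu_{m^{*}+1}\leq 2^{\gimel_{\alpha+m^{*}}}$, hence $\ded(2^{\gimel_{\alpha+m^{*}}})\geq\ded(\mu_{k})\geq 2^{\gimel_{\alpha+k}}$. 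Once you replace your last paragraph with these two steps, the proof is complete and matches the paper's.
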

\begin{proof}
We follow the proof of \cite[Claim 3.4]{Sh430}. Let $\theta_{n}=\gimel_{\alpha+n}$
for $n\leq k$. Note that $\theta_{n+1}$ is regular and $\theta_{n+1}>\theta_{n}$.
We define:
\begin{lyxlist}{00.00.0000}
\item [{$\left(*\right)_{\theta_{n}}$}] for every regular $\chi\leq2^{\theta_{n}}$
there is a tree of cardinality $\theta_{n}$ with $\geq\chi$-many
branches of length $\theta_{n}$.
\end{lyxlist}
Let $S_{0}=\left\{ 0<n\leq k:\left(*\right)_{\theta_{n}}\mbox{ fails}\right\} $. 

By Fact \ref{fac: PCF}(\ref{enu: finding inaccessible mu}) with
$\lambda=\theta_{n}$ and the definitions of $S_{0}$ and of the $\gimel$'s
it follows that for each $n\in S_{0}$ there is $\mu_{n}$ such that: 
\begin{lyxlist}{00.00.0000}
\item [{$\left(\alpha\right)_{n}$}] $\theta_{n}=\cof\mu_{n}<\mu_{n}\leq2^{<\theta_{n}}=2^{\theta_{n-1}}$(as
$2^{<\theta_{n}}\leq\theta_{n}\times2^{\theta_{n-1}}\leq2^{\theta_{0}}\times2^{\theta_{n-1}}\leq2^{\theta_{n-1}}$).
\item [{$\left(\beta\right)_{n}$}] $\pp_{\theta_{n}}\left(\mu_{n}\right)=\pp_{\Gamma\left(\theta_{n}\right)}\left(\mu_{n}\right)=2^{\theta_{n}}$
(as $\pp_{\Gamma\left(\theta_{n}\right)}\left(\mu_{n}\right)=2^{\theta_{n}}$
by Fact \ref{fac: PCF}(\ref{enu: finding inaccessible mu})(b), and
$\pp_{\Gamma\left(\theta_{n}\right)}\left(\mu_{n}\right)\leq\pp_{\theta_{n}}\left(\mu_{n}\right)\leq\mu_{n}^{\theta_{n}}\leq\left(2^{\theta_{n-1}}\right)^{\theta_{n}}\leq2^{\theta_{n}}$
by Fact \ref{fac: general properties of pp}).
\item [{$\left(\gamma\right)_{n}$}] For any $\mu'$ we have that $\theta_{n}<\mu'<\mu_{n}\,\land\,\mbox{\ensuremath{\cof}}\mu'\leq\theta_{n}$
implies $\pp_{\Gamma\left(\lambda^{+},2\right)}\left(\mu'\right)<\mu_{n}$
(by Fact \ref{fac: PCF}(\ref{enu: finding inaccessible mu})(c)).%

\item [{$\left(\delta\right)_{n}$}] $\ded\left(\mu_{n}\right)\geq2^{\theta_{n}}$
(as for any regular $\chi\leq2^{\theta_{n}}$ there is linear order
of cardinality $\geq\chi$ with a dense subset of size $\mu_{n}$
by Fact \ref{fac: PCF}(\ref{enu: finding inaccessible mu})(a)).
\end{lyxlist}

Let $S_{1}=\left\{ n\in S_{0}:\mu_{n}\geq2^{\gimel_{\alpha}}\right\} $.
Then we have the following claims.

~
\begin{lyxlist}{00.00.0000}
\item [{$\left(*\right)_{1}$}] If $n\leq k$ and $n\notin S_{0}$ then
$\ded\left(2^{\gimel_{\alpha}}\right)\geq2^{\gimel_{\alpha+n}}$.
\end{lyxlist}
\emph{Proof}. By the definition of $S_{0}$ and of $\theta_{n}$ it
follows that $\ded\left(\theta_{n}\right)\geq2^{\gimel_{\alpha+n}}$
(taking supremum over trees corresponding to regular $\chi$'s less
or equal to $2^{\theta_{n}}$), and $\theta_{n}\leq2^{\gimel_{\alpha}}$
by assumption. Thus $\ded\left(2^{\gimel_{\alpha}}\right)\geq2^{\gimel_{\alpha+n}}$
as wanted.

~
\begin{lyxlist}{00.00.0000}
\item [{$\left(*\right)_{2}$}] If $n\leq k$ and $n\in S_{0}\setminus S_{1}$
then $\ded\left(2^{\gimel_{\alpha}}\right)\geq2^{\gimel_{\alpha+n}}$.
\end{lyxlist}
\emph{Proof}. By the definition of $S_{1}$ we have $\mu_{n}<2^{\gimel_{\alpha}}$.
On the other hand, as $n\in S_{0}$, we have $\ded\left(\mu_{n}\right)\geq2^{\theta_{n}}$
by $\left(\delta\right)_{n}$. Combining we get $\ded\left(2^{\gimel_{\alpha}}\right)\geq2^{\gimel_{\alpha+n}}$.%

~
\begin{lyxlist}{00.00.0000}
\item [{$\left(*\right)_{3}$}] If $n$ and $n+1$ are from $S_{1}$ then
$\mu_{n}>\mu_{n+1}$.
\end{lyxlist}
\emph{Proof}. By the assumption $\mu_{n}\geq2^{\gimel_{\alpha}}\geq\theta_{n+1}=\cof\theta_{n+1}$,
and in fact $\mu_{n}>\theta_{n+1}$ as they are of different cofinality. 

Assume that $\mu_{n}<\mu_{n+1}$. Then by Fact \ref{fac: PCF}(\ref{enu: inverting pp's})
with $\lambda=\mu_{n}$, $\mu=\mu_{n+1}$ and $\kappa=\theta_{n+1}$
(as $\max\left\{ \cof\mu_{n},\cof\mu_{n+1}\right\} =\max\left\{ \theta_{n},\theta_{n+1}\right\} <\min\left\{ \mu_{n},\mu_{n+1}\right\} $
by $\left(\alpha\right)_{n}$ and $\left(\alpha\right)_{n+1}$, and
$\pp_{\theta_{n+1}}\left(\mu_{n}\right)\geq\pp_{\Gamma\left(\theta_{n}\right)}\left(\mu_{n}\right)=2^{\theta_{n}}\geq\mu_{n+1}$)
we would get $\pp_{\theta_{n+1}}\left(\mu_{n+1}\right)\leq^{+}\pp_{\theta_{n+1}}\left(\mu_{n}\right)$. 

On the other hand by $\left(\gamma\right)_{n+1}$ we would get that
$\theta_{n+1}<\mu_{n}<\mu_{n+1}\,\land\,\mbox{\ensuremath{\cof}}\mu_{n}\leq\theta_{n+1}$
implies $\pp_{\theta_{n+1}}\left(\mu_{n}\right)<\mu_{n+1}\leq2^{\theta_{n+1}}=\pp_{\theta_{n+1}}\left(\mu_{n+1}\right)$
--- a contradiction. %
{} Thus we conclude that $\mu_{n}\geq\mu_{n+1}$, and in fact $\mu_{n}>\mu_{n+1}$
as they are of different cofinalities.

~

We try to define $m=\max\left\{ 0<n\leq k:n\notin S_{1}\right\} $.%

\begin{casenv}
\item $m$ not defined. So $S_{1}=\left\{ 1,\ldots,k\right\} $ (and we
may assume that $k\geq2$), hence $\mu_{1}>\ldots>\mu_{k}$ by $\left(*\right)_{3}$,
hence $\mu_{k}<\mu_{1}\leq2^{\theta_{0}}$. But by the definition
of $S_{1}$ actually $\mu_{k}\geq2^{\theta_{0}}$ --- a contradiction.
\item $m$ is well-defined. So $\left\{ m+1,\ldots,k\right\} \subseteq S_{1}$
hence as in Case 1 we have $\mu_{k}<\mu_{m+1}\leq2^{\theta_{m}}$
hence $\ded\left(2^{\gimel_{\alpha+m}}\right)\geq\ded\left(\mu_{k}\right)\geq2^{\gimel_{\alpha+k}}$
by $\left(\delta\right)_{k}$. Besides, $\ded\left(2^{\gimel_{\alpha}}\right)\geq2^{\gimel_{\alpha+m}}$
(by $\left(*\right)_{1}$ if $m\notin S_{0}$ and by $\left(*\right)_{2}$
if $m\in S_{1}\setminus S_{0}$) --- so we are done.
\end{casenv}
\end{proof}
\begin{prop}
\label{prop: two ded plus iterations} Assume that $\gimel_{\alpha+k}\leq2^{\gimel_{\alpha}}$
for some $k\in\omega$. Then for some $m\leq k$:
\begin{itemize}
\item $2^{\gimel_{\alpha+k}}\leq^{+}\ded\left(2^{\gimel_{\alpha+k-1}}\right)$,
\item $2^{\gimel_{\alpha+k-1}}\leq^{+}\ded\left(2^{\gimel_{\alpha+m}}\right)$,
\item $2^{\gimel_{\alpha+m}}\leq^{+}\ded\left(2^{\gimel_{\alpha+m-1}}\right)$,
\item $2^{\gimel_{\alpha+m-1}}\leq^{+}\ded\left(2^{\gimel_{\alpha}}\right)$.
\end{itemize}
\end{prop}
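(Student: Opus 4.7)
The plan is to deduce this strengthened form from Proposition \ref{prop: two ded iterations} together with Lemma \ref{lem: exp vs ded on gimels}, exploiting the structural fact that $2^{\gimel_{\alpha+j-1}}<2^{\gimel_{\alpha+j}}$ for every $j\geq 1$ --- an immediate consequence of how $\gimel_{\alpha+j}$ is defined as the least $\gimel$ with $2^{\gimel}>2^{\gimel_{\alpha+j-1}}$.

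First I will apply Proposition \ref{prop: two ded iterations} to obtain some $m\leq k$ (which the proof of that proposition shows may be taken with $m\geq 1$ whenever $k\geq 1$, since its Case~1 is contradictory) satisfying
\[
\ded\left(2^{\gimel_{\alpha}}\right)\geq 2^{\gimel_{\alpha+m}}\quad\text{and}\quad\ded\left(2^{\gimel_{\alpha+m}}\right)\geq 2^{\gimel_{\alpha+k}}.
\]
Up to the strength of the inequality, these are precisely the fourth and second bullets of the statement. To upgrade plain $\leq$ to $\leq^{+}$ essentially for free, I shift the left-hand side down by one $\gimel$-step: the strict gaps
\[
2^{\gimel_{\alpha+m-1}}<2^{\gimel_{\alpha+m}}\leq\ded\left(2^{\gimel_{\alpha}}\right),\qquad 2^{\gimel_{\alpha+k-1}}<2^{\gimel_{\alpha+k}}\leq\ded\left(2^{\gimel_{\alpha+m}}\right)
\]
entail the corresponding $\leq^{+}$ inequalities trivially (strict $<$ is always stronger than $\leq^{+}$), yielding bullets four and two.

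For the remaining \emph{single-step} bullets one and three --- that is, $2^{\gimel_{\alpha+k}}\leq^{+}\ded\left(2^{\gimel_{\alpha+k-1}}\right)$ and $2^{\gimel_{\alpha+m}}\leq^{+}\ded\left(2^{\gimel_{\alpha+m-1}}\right)$ --- I simply invoke Lemma \ref{lem: exp vs ded on gimels} with $\beta=\alpha+k-1$ and $\beta=\alpha+m-1$ respectively; these inequalities are exactly the content of that lemma and nothing further is required.

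All the genuine PCF-theoretic work has already been absorbed into Proposition \ref{prop: two ded iterations}, so I do not expect any real obstacle here. The only new idea is the observation that by sacrificing one step in the $\gimel$-index one converts each weak $\leq$ coming from the previous proposition into a strict $<$, thereby trading two extra iterations of $\ded$ for attainment of the suprema. The one item worth double-checking is a handful of degenerate indices ($k\leq 1$ or $m=0$), but in each such case the affected bullets either collapse to a single application of Lemma \ref{lem: exp vs ded on gimels} or become trivial via $X<\ded(X)$.
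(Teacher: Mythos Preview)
Your proof is correct and is in fact a cleaner packaging of the same underlying idea the paper uses. The paper's proof re-opens the argument of Proposition~\ref{prop: two ded iterations}, proving shifted variants $(*)_1^+$, $(*)_2^+$ of the intermediate claims $(*)_1$, $(*)_2$; the key step there is again the observation that $(2^{\gimel_{\alpha+n}})^+\leq 2^{\gimel_{\alpha+n+1}}$, i.e.\ the same strict gap $2^{\gimel_{\alpha+j-1}}<2^{\gimel_{\alpha+j}}$ you exploit. You instead apply Proposition~\ref{prop: two ded iterations} as a black box and perform the index-shift afterwards, which avoids revisiting any PCF machinery. Both routes then finish bullets one and three via Lemma~\ref{lem: exp vs ded on gimels}.

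One small remark: your assertion that ``Case~1 is contradictory'' whenever $k\geq 1$ is not literally what the paper's proof of Proposition~\ref{prop: two ded iterations} shows --- that argument explicitly assumes $k\geq 2$ in Case~1. For $k=1$ with $S_1=\{1\}$ one only gets $\mu_1=2^{\theta_0}$, not a contradiction. However, you already flag $k\leq 1$ as degenerate and your suggested treatment (take $m=1$; then bullets one and three are Lemma~\ref{lem: exp vs ded on gimels}, and bullets two and four follow from $2^{\gimel_\alpha}<\ded(2^{\gimel_\alpha})\leq\ded(2^{\gimel_{\alpha+1}})$) is correct, so this does not create a gap.
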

\begin{proof}
We modify the proof of Proposition \ref{prop: two ded iterations}.
We have:
\begin{lyxlist}{00.00.0000}
\item [{$\left(*\right)_{1}^{+}$}] If $n+1\leq k$ and $n+1\notin S_{0}$
then $\ded\left(2^{\gimel_{\alpha}}\right)\,^{+}\geq2^{\gimel_{\alpha+n}}$.
\end{lyxlist}
\emph{Proof.} As $\left(2^{\gimel_{\alpha+n}}\right)^{+}$ is regular,
$\left(2^{\gimel_{\alpha+n}}\right)^{+}\leq2^{\gimel_{\alpha+n+1}}$
and $\left(*\right)_{\theta_{n+1}}$ holds by the definition of $S_{0}$,
it follows that $\ded\left(\theta_{n+1}\right)\,^{+}\geq2^{\gimel_{\alpha+n}}$,
and $\theta_{n+1}\leq2^{\gimel_{\alpha}}$ by assumption. Thus $\ded\left(2^{\gimel_{\alpha}}\right)\,^{+}\geq2^{\gimel_{\alpha+n}}$
as wanted.

~
\begin{lyxlist}{00.00.0000}
\item [{$\left(*\right)_{2}^{+}$}] If $n+1\leq k$ and $n+1\in S_{0}\setminus S_{1}$
then $\ded\left(2^{\gimel_{\alpha}}\right)\geq2^{\gimel_{\alpha+n}}$.
\end{lyxlist}
\emph{Proof}. If $n+1\in S_{0}\setminus S_{1}$ then $\mu_{n+1}<2^{\gimel_{\alpha}}$
and $\ded\left(\mu_{n+1}\right)\,^{+}\geq2^{\theta_{n}}$ by $\left(\delta\right)_{n+1}$. 

~

Now in Case 1 we get a contradiction in the same way as before, so
we may assume that $m$ is well defined, i.e. $\left\{ m+1,\ldots,k\right\} \subseteq S_{1}$.
As before we get $\mu_{k}<\mu_{m+1}\leq2^{\theta_{m}}$, hence $\ded\left(2^{\gimel_{\alpha+m}}\right)\geq\ded\left(\mu_{k}\right)\,^{+}\geq2^{\gimel_{\alpha+k-1}}$
by $\left(\delta\right)_{k}$. Besides, $\ded\left(2^{\gimel_{\alpha}}\right)\,^{+}\geq2^{\gimel_{\alpha+m-1}}$
(by $\left(*\right)_{1}^{+}$ if $m\notin S_{0}$ and by $\left(*\right)_{2}^{+}$
if $m\in S_{1}\setminus S_{0}$). We can conclude by Lemma \ref{lem: exp vs ded on gimels}.
\end{proof}

Although, as it was already mentioned, it is consistent for $\kappa$
of uncountable cofinality that $\ded\kappa<2^{\kappa}$, we prove
(in ZFC) that these values are not so far apart and that four iterations
of $\ded$ are sufficient to get the exponent.
\begin{thm}
\label{thm: ded bigger than exponent} Let $\mu$ be an arbitrary
cardinal. Then there are $\lambda_{0},\ldots,\lambda_{4}$ such that:
\begin{enumerate}
\item $\lambda_{0}\leq\mu$,
\item $\lambda_{i+1}\leq\ded\left(\lambda_{i}\right)$ for $i<4$,
\item $2^{\mu}\leq\lambda_{4}$.
\end{enumerate}
\end{thm}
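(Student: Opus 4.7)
The plan is to construct the chain $\lambda_0 \le \lambda_1 \le \cdots \le \lambda_4$ by combining a single direct tree estimate for $\ded(\mu)$ with one application of Proposition~\ref{prop: two ded iterations} (contributing two iterations) and one application of Lemma~\ref{lem: exp vs ded on gimels} (contributing one iteration). Introduce two ordinals attached to $\mu$: let $\beta$ be the largest ordinal with $\gimel_\beta \leq \mu$, so that $2^\mu = 2^{\gimel_\beta}$, and let $\alpha$ be the least ordinal with $2^{\gimel_\alpha} > \mu$. Since $2^{\gimel_\beta} = 2^\mu > \mu$ we get $\alpha \leq \beta$, and in particular $\gimel_\alpha \leq \mu$; moreover, by minimality of $\alpha$, $2^\nu \leq \mu$ for every cardinal $\nu < \gimel_\alpha$.

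First iteration. Take $\lambda_0 := \mu$ and consider the binary tree $T = 2^{<\gimel_\alpha}$. It has at most $\sum_{\nu<\gimel_\alpha} 2^{\nu} \leq \gimel_\alpha\cdot \mu = \mu$ nodes (using the previous paragraph) and $2^{\gimel_\alpha}$ branches of length $\gimel_\alpha$, so $\ded(\mu) \geq 2^{\gimel_\alpha}$, and we may set $\lambda_1 := 2^{\gimel_\alpha}$. Next two iterations. Choose $k\in\omega$ maximal with $\gimel_{\alpha+k}\leq 2^{\gimel_\alpha}$ (this set contains $1$ by Cantor, so such a maximum exists in the generic case). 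Proposition~\ref{prop: two ded iterations} applied to this $\alpha$ and $k$ produces some $m\leq k$ such that $\ded(2^{\gimel_\alpha})\geq 2^{\gimel_{\alpha+m}}$ and $\ded(2^{\gimel_{\alpha+m}})\geq 2^{\gimel_{\alpha+k}}$; set $\lambda_2 := 2^{\gimel_{\alpha+m}}$ and $\lambda_3 := 2^{\gimel_{\alpha+k}}$. Fourth iteration. By Lemma~\ref{lem: exp vs ded on gimels}, $\ded(2^{\gimel_{\alpha+k}})\geq 2^{\gimel_{\alpha+k+1}}$, so take $\lambda_4 := 2^{\gimel_{\alpha+k+1}}$.

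The key estimate that closes the argument uses the maximality of $k$: by that choice $\gimel_{\alpha+k+1} > 2^{\gimel_\alpha} > \mu \geq \gimel_\beta$, hence $\alpha+k+1 > \beta$, and therefore $\lambda_4 = 2^{\gimel_{\alpha+k+1}} \geq 2^{\gimel_{\beta+1}} > 2^{\gimel_\beta} = 2^\mu$. Together with $\lambda_0 \leq \mu$ and $\lambda_{i+1} \leq \ded(\lambda_i)$ at each step, this gives the sequence required by the theorem.

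The main obstacle is the degenerate situation in which no finite maximum $k$ exists, i.e.\ $\gimel_{\alpha+n}\leq 2^{\gimel_\alpha}$ for every $n<\omega$. Here one still wants to apply Proposition~\ref{prop: two ded iterations} to some finite $k$ with $\alpha+k\geq\beta$, which requires showing $\beta-\alpha<\omega$ in this configuration (so one can simply set $k=\beta-\alpha$ and recover the argument above); if instead $\beta\geq\alpha+\omega$, one must argue that the tree $2^{<\gimel_{\alpha+\omega}}$ combined with an iteration of Proposition~\ref{prop: two ded iterations} already yields $\ded^{(2)}(\lambda_1)\geq 2^{\gimel_{\alpha+\omega}}$, and then the final Lemma step produces $\lambda_4\geq 2^\mu$. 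Managing this bookkeeping, which ultimately rests on Fact~\ref{fac: PCF} controlling how the $\gimel$-sequence can rise through the interval $(\mu,2^{\gimel_\alpha})$, is the only delicate point of the proof.
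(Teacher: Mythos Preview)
Your argument is clean and correct in the ``generic'' case where a maximal $k$ exists (indeed there $\lambda_3=2^{\gimel_{\alpha+k}}\geq 2^{\gimel_\beta}=2^\mu$ already, so the fourth step is not even needed), and also when no maximum exists but $\beta-\alpha<\omega$. The genuine problem is the remaining configuration $\beta\geq\alpha+\omega$, which you flag but do not resolve. Your sketch there does not go through: the tree $2^{<\gimel_{\alpha+\omega}}$ has $\sum_{n}2^{\gimel_{\alpha+n}}$ nodes, and there is no reason this should be bounded by $\ded(\lambda_1)=\ded(2^{\gimel_\alpha})$; moreover, even if you managed to reach $2^{\gimel_{\alpha+\omega}}$ in three steps, one further application of Lemma~\ref{lem: exp vs ded on gimels} only yields $2^{\gimel_{\alpha+\omega+1}}$, which is $\geq 2^\mu$ only when $\beta\leq\alpha+\omega$. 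Nothing in your setup bounds $\beta-\alpha$: between your $\alpha$ and $\beta$ there may be arbitrarily many limit ordinals, and your bottom-up scheme has no mechanism to cross each of them within the four available iterations. So this is not bookkeeping---the architecture has to change.

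The paper avoids exactly this trap by anchoring from the top rather than the bottom. It takes the ordinal you call $\beta$ and lets $\delta_*$ be the largest limit ordinal $\leq\beta$, so that $\beta=\delta_*+n_*$ with $n_*<\omega$ guaranteed. Two iterations are then spent getting from $\mu$ to $2^{\gimel_{\delta_*}}$: first $\ded(\mu)\geq\sum_{\gamma<\delta_*}2^{\gimel_\gamma}$, which is where Fact~\ref{fac: PCF}(\ref{enu: finding a sequence of mu's}) is used \emph{directly} (your $\odot_1$/$\odot_2$), and then the tree $2^{<\gimel_{\delta_*}}$ gives the second step. The remaining two iterations cover the \emph{finite} tail from $\delta_*$ to $\beta$ via Proposition~\ref{prop: two ded iterations} (with a separate short argument when $\gimel_\beta>2^{\gimel_{\delta_*}}$). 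In short, your choice of $\alpha$ makes the first $\ded$-step immediate but leaves an uncontrolled ordinal distance to $\beta$; the paper's choice of $\delta_*$ forces that distance to be finite at the cost of invoking Fact~\ref{fac: PCF}(\ref{enu: finding a sequence of mu's}) explicitly for the first step.
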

\begin{proof}
As the sequence of the $\gimel$'s is increasing, for some $\alpha$
we have $\gimel_{\alpha}\leq\mu<\gimel_{\alpha+1}$, so also $\alpha\leq\mu$.

First of all, for any ordinal $\beta$ with $\beta+\omega\leq\alpha$
and $2^{\gimel_{\beta}}>\gimel_{\beta+\omega}$ we have (by Fact \ref{fac: PCF}(\ref{enu: finding a sequence of mu's})
taking $\theta_{0}=\gimel_{\beta}$ and $\theta_{n}=\gimel_{\beta+n}$):
\begin{lyxlist}{00.00.0000}
\item [{$\odot_{1}$}] For infinitely many $\gamma\in\left[\beta,\beta+\omega\right)$
and arbitrary regular $\gimel\leq2^{\gimel_{\gamma}}$, there is a
tree $T$ with $\left|T\right|\in\left[\gimel_{\gamma},\gimel_{\gamma+1}\right)$
and at least $\gimel$-many branches of length $\gimel_{\gamma}$.
\end{lyxlist}
Let $\delta_{*}$ be the largest non-successor ordinal $\leq\alpha$,
so $\alpha=\delta_{*}+n_{*}$ for some $n_{*}<\omega$. We have:
\begin{lyxlist}{00.00.0000}
\item [{$\odot_{2}$}] There is a linear order $I$ of cardinality $\leq\mu$
with $\geq\sum\left\{ 2^{\gimel_{\beta}}\,:\,\beta<\delta_{*}\right\} $
Dedekind cuts.
\end{lyxlist}
(Indeed, if $\gimel_{\delta_{*}}$ is a strong limit cardinal then
$\sum\left\{ 2^{\gimel_{\beta}}\,:\,\beta<\delta_{*}\right\} \leq\mu$
and this is trivial. Otherwise, the demand $\gimel_{\beta+\omega}\leq2^{\gimel_{\beta}}<2^{\gimel_{\beta+1}}$
holds for every large enough $\beta<\delta_{*}$, so by $\odot_{1}$
and Fact \ref{fac: equivalent definitions of ded} we can conclude
by taking the sum of the corresponding linear orders and noting that
$\delta_{*}\leq\mu$).

Let $\lambda_{0}=\mu$, $\lambda_{1}=\sum\left\{ 2^{\gimel_{\beta}}\,:\,\beta<\delta_{*}\right\} $
and $\lambda_{2+n}=2^{\gimel_{\delta_{*}+n}}$ for $n\in\left\{ 0,\ldots,n_{*}\right\} $.
Note that $\lambda_{2+n_{*}}=2^{\gimel_{\alpha}}=2^{\mu}$. 

We have:
\begin{itemize}
\item $\lambda_{1}\leq^{+}\Ded\lambda_{0}$ (by $\odot_{2}$).
\item $\lambda_{2}\leq^{+}\Ded\lambda_{1}$ (as $2^{<\gimel_{\delta_{*}}}$
is a tree with $\sum\left\{ 2^{\kappa}\,:\,\kappa<\gimel_{\delta_{*}}\right\} =\sum\left\{ 2^{\gimel_{\beta}}\,:\,\beta<\delta_{*}\right\} =\lambda_{1}$
nodes and $2^{\gimel_{\delta_{*}}}=\lambda_{2}$ branches).
\item $\lambda_{2+n+1}\leq^{+}\Ded\left(\lambda_{2+n}\right)$ for $n<n_{*}$
(by Lemma \ref{lem: exp vs ded on gimels}).
\end{itemize}
If $\delta_{*}=\alpha$ then we are done as $\lambda_{2}=2^{\gimel_{\alpha}}=2^{\mu}$
(as $\mu<\gimel_{\alpha+1}$ and $\gimel_{\alpha+1}$ is smallest
with $2^{\gimel_{\alpha}}<2^{\gimel_{\alpha+1}}$), so assume $\delta_{*}=\alpha_{*}+n_{*}$
and $n_{*}>0$.

If $\gimel_{\delta_{*}+n_{*}}\leq2^{\gimel_{\delta_{*}}}$, then by
Proposition \ref{prop: two ded iterations} there is some $m\leq n_{*}$
such that $\lambda_{3}'=\ded\left(2^{\gimel_{\delta_{*}}}\right)\geq2^{\gimel_{\delta_{*}+m}}$
and $\lambda_{4}'=\ded\left(2^{\gimel_{\delta_{*}+m}}\right)\geq2^{\gimel_{\delta_{*}+n_{*}}}=2^{\gimel_{\alpha}}=2^{\mu}$.
It then follows that $\lambda_{0},\lambda_{1},\lambda_{2},\lambda_{3}',\lambda_{4}'$
are as wanted.

Otherwise $\gimel_{\delta_{*}+n_{*}}>2^{\gimel_{\delta_{*}}}$, and
let $n$ be the biggest such that $\gimel_{\delta_{*}+n_{*}}>2^{\gimel_{\delta_{*}+n}}$,
it follows that $n\leq n_{*}-1$. Then $\gimel_{\delta_{*}+n_{*}}\leq2^{\gimel_{\delta_{*}+n+1}}$
and again by Proposition \ref{prop: two ded iterations} we get some
$m$ such that:
\begin{itemize}
\item $\lambda_{0}''=2^{\gimel_{\delta_{*}+n}}<\gimel_{\delta_{*}+n_{*}}\leq\mu$,
\item $\lambda_{1}''=2^{\gimel_{\delta_{*}+n+1}}\leq^{+}\ded\left(2^{\gimel_{\delta_{*}+n}}\right)$
(by Lemma \ref{lem: exp vs ded on gimels}),
\item $\lambda_{2}''=2^{\gimel_{\delta_{*}+m}}\leq\ded\left(2^{\gimel_{\delta_{*}+n+1}}\right)$,
\item $2^{\mu}=2^{\gimel_{\delta_{*}+n_{*}}}\leq\lambda_{3}''=\ded\left(2^{\gimel_{\delta_{*}+m}}\right)$.
\end{itemize}
But then $\left\langle \lambda_{i}''\right\rangle _{i\leq3}$ are
as wanted.
\end{proof}
Similarly we have:
\begin{cor}
\label{cor: reaching exp by ded plus}Let $\mu$ be an arbitrary cardinal.
Then there are $\lambda_{0},\ldots,\lambda_{6}$ such that:
\begin{enumerate}
\item $\lambda_{0}\leq\mu$,
\item $\lambda_{i+1}\leq^{+}\Ded(\lambda_{i})$ for all $i<6$,
\item $2^{\mu}\leq\lambda_{6}$.
\end{enumerate}
\end{cor}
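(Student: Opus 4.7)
The plan is to follow the proof of Theorem \ref{thm: ded bigger than exponent} almost verbatim, but substitute Proposition \ref{prop: two ded plus iterations} for Proposition \ref{prop: two ded iterations} at the pcf-theoretic step. Since the plus-version costs four $\ded$-iterations rather than two, the final chain grows from length four to length six, which matches the statement.

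As before, choose the ordinal $\alpha$ with $\gimel_{\alpha}\leq\mu<\gimel_{\alpha+1}$, let $\delta_{*}$ be the largest non-successor ordinal $\leq\alpha$, and write $\alpha=\delta_{*}+n_{*}$. Take $\lambda_{0}=\mu$, $\lambda_{1}=\sum\{2^{\gimel_{\beta}}:\beta<\delta_{*}\}$, and $\lambda_{2}=2^{\gimel_{\delta_{*}}}$. The inequality $\lambda_{1}\leq^{+}\Ded(\lambda_{0})$ is given by $\odot_{2}$ (by Fact \ref{fac: equivalent definitions of ded}(2), since the witnessing linear order of size $\leq\mu$ realises the required number of cuts, the supremum is attained whenever equality occurs), and $\lambda_{2}\leq^{+}\Ded(\lambda_{1})$ comes from the tree $2^{<\gimel_{\delta_{*}}}$ via Fact \ref{fac: equivalent definitions of ded}(4). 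If $n_{*}=0$ we already have $\lambda_{2}=2^{\mu}$ and pad the chain trivially using $\nu<\ded(\nu)$.

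Assume henceforth $n_{*}>0$. If $\gimel_{\delta_{*}+n_{*}}\leq 2^{\gimel_{\delta_{*}}}$, apply Proposition \ref{prop: two ded plus iterations} with the rôle of $\alpha$ played by $\delta_{*}$ and $k=n_{*}$. This produces some $m\leq n_{*}$ such that
\[
2^{\gimel_{\delta_{*}+m-1}}\leq^{+}\Ded(2^{\gimel_{\delta_{*}}}),\quad 2^{\gimel_{\delta_{*}+m}}\leq^{+}\Ded(2^{\gimel_{\delta_{*}+m-1}}),
\]
\[
2^{\gimel_{\delta_{*}+n_{*}-1}}\leq^{+}\Ded(2^{\gimel_{\delta_{*}+m}}),\quad 2^{\gimel_{\delta_{*}+n_{*}}}\leq^{+}\Ded(2^{\gimel_{\delta_{*}+n_{*}-1}}),
\]
and we set $\lambda_{3}=2^{\gimel_{\delta_{*}+m-1}}$, $\lambda_{4}=2^{\gimel_{\delta_{*}+m}}$, $\lambda_{5}=2^{\gimel_{\delta_{*}+n_{*}-1}}$, $\lambda_{6}=2^{\gimel_{\delta_{*}+n_{*}}}=2^{\mu}$.

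If instead $\gimel_{\delta_{*}+n_{*}}>2^{\gimel_{\delta_{*}}}$, repeat the reduction of Theorem \ref{thm: ded bigger than exponent}: let $n\leq n_{*}-1$ be largest with $\gimel_{\delta_{*}+n_{*}}>2^{\gimel_{\delta_{*}+n}}$, so $\gimel_{\delta_{*}+n_{*}}\leq 2^{\gimel_{\delta_{*}+n+1}}$. Restart the chain with $\lambda_{0}:=2^{\gimel_{\delta_{*}+n}}\leq\mu$, use Lemma \ref{lem: exp vs ded on gimels} to get $\lambda_{1}:=2^{\gimel_{\delta_{*}+n+1}}\leq^{+}\Ded(\lambda_{0})$, then apply Proposition \ref{prop: two ded plus iterations} to $(\delta_{*}+n+1,n_{*}-n-1)$ to supply four more $\leq^{+}$-steps terminating at $2^{\gimel_{\delta_{*}+n_{*}}}=2^{\mu}$; this yields a chain of length five, which we extend to length six by one trivial padding step at the end.

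The only potential obstacle is bookkeeping, namely verifying that each step genuinely delivers the plus-inequality rather than just $\leq$. This is immediate because every ingredient has already been upgraded: Proposition \ref{prop: two ded plus iterations} and Lemma \ref{lem: exp vs ded on gimels} are stated with $\leq^{+}$, the tree $2^{<\gimel_{\delta_{*}}}$ attains $2^{\gimel_{\delta_{*}}}$ branches on the nose, and $\odot_{2}$ either is strict or its supremum is visibly witnessed. Hence no step of the sequence is borderline, and the iteration closes.
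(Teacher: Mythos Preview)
Your proposal is correct and follows exactly the approach indicated by the paper: the paper's own proof is the single sentence ``Follows from the proof of Theorem~\ref{thm: ded bigger than exponent} using Proposition~\ref{prop: two ded plus iterations} instead of Proposition~\ref{prop: two ded iterations},'' and you have simply unpacked that sentence carefully, handling the same case split and the same padding considerations.
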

\begin{proof}
Follows from the proof of Theorem \ref{thm: ded bigger than exponent}
using Proposition \ref{prop: two ded plus iterations} instead of
Proposition \ref{prop: two ded iterations}.\end{proof}
\begin{problem}
What is the smallest $1<n\leq4$ for which Theorem \ref{thm: ded bigger than exponent}
remains true? Can the bound be improved at least for certain classes
of cardinals? Also, how might the required number of iterations vary
in different models of ZFC?\end{problem}
\begin{cor}
\label{cor: ded bigger then beth} For every cardinal $\mu$ and $k<\omega$
there is some $n<\omega$ and a sequence $\left\langle \lambda_{m}:m\leq n\right\rangle $
such that:
\begin{itemize}
\item \textup{$\lambda_{0}\leq\mu$,}
\item $\lambda_{0}<...<\lambda_{n}$ and $\Ded(\lambda_{m})\,^{+}\geq\lambda_{m+1}$,
\item $\lambda_{n}\geq\beth_{k}\left(\mu\right)$.
\end{itemize}
\end{cor}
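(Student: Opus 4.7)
The plan is to prove the corollary by induction on $k$, using Corollary~\ref{cor: reaching exp by ded plus} at each step to push the endpoint of the sequence up by one exponentiation, and splicing the newly produced seven-term sequence onto the end of the sequence built so far.

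The base case $k=0$ is trivial: take $n=0$ and $\lambda_0=\mu$, since $\beth_0(\mu)=\mu$. For the inductive step, assume we have already constructed a strictly increasing finite sequence $\lambda_0<\cdots<\lambda_n$ with $\lambda_0\leq\mu$, $\ded(\lambda_m)^+\geq\lambda_{m+1}$ for all $m<n$, and $\lambda_n\geq\beth_{k-1}(\mu)$. Apply Corollary~\ref{cor: reaching exp by ded plus} with $\lambda_n$ playing the role of ``$\mu$'' to obtain cardinals $\lambda_0^*,\ldots,\lambda_6^*$ satisfying $\lambda_0^*\leq\lambda_n$, $\lambda_{i+1}^*\leq^{+}\ded(\lambda_i^*)$ for $i<6$, and $\lambda_6^*\geq 2^{\lambda_n}\geq\beth_k(\mu)$.

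To splice the two sequences together, let $i_0$ be the least index with $\lambda_{i_0}^*>\lambda_n$; such an $i_0$ exists because $\lambda_6^*\geq 2^{\lambda_n}>\lambda_n$, and $i_0\geq 1$ because $\lambda_0^*\leq\lambda_n$. We then append $\lambda_{i_0}^*,\lambda_{i_0+1}^*,\ldots,\lambda_6^*$ to the end of the existing sequence (discarding any duplicate consecutive entries within $\lambda_{i_0}^*,\ldots,\lambda_6^*$ to preserve strict monotonicity). All internal $\ded^{+}$ inequalities are inherited unchanged; the only nontrivial check is at the splice $\lambda_n\to\lambda_{i_0}^*$. Since $\lambda_{i_0-1}^*\leq\lambda_n$ and $\ded$ is monotone, we have $\ded(\lambda_{i_0-1}^*)\leq\ded(\lambda_n)$, so from $\lambda_{i_0}^*\leq^{+}\ded(\lambda_{i_0-1}^*)$ we conclude $\lambda_{i_0}^*\leq^{+}\ded(\lambda_n)$, as required.

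After $k$ iterations of this procedure, the endpoint of the sequence has reached at least $\beth_k(\mu)$, with total length at most $7k+1$. The main obstacle is ensuring that the splicing preserves the $\ded^{+}$ relation at each junction, which reduces to the elementary observation that $\ded$ is monotone together with the fact that $\leq^{+}$ composes with $\leq$ on the right. Everything else is a direct bookkeeping application of Corollary~\ref{cor: reaching exp by ded plus}.
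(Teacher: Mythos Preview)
Your proof is correct and is exactly what the paper means by ``follows by iterating Corollary~\ref{cor: reaching exp by ded plus}''; you have simply made the splicing bookkeeping explicit. One small wording issue: ``discarding any duplicate consecutive entries'' should be ``passing to a strictly increasing subsequence by the greedy rule'', since Corollary~\ref{cor: reaching exp by ded plus} does not assert that the $\lambda_i^*$ are monotone --- but the same monotonicity-of-$\ded$ argument you used at the splice (a witness for $\lambda_{j+1}^*\leq^{+}\ded(\lambda_j^*)$ with $\lambda_j^*\leq\lambda_i^*$ is already a witness for $\lambda_{j+1}^*\leq^{+}\ded(\lambda_i^*)$) handles this immediately.
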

\begin{proof}
Follows by iterating Corollary \ref{cor: reaching exp by ded plus}.
\end{proof}

\section{\label{sec: no beth_omega for dependent T}On 2-cardinal models for
dependent $T$}

We recall that a formula $\varphi\left(x,y\right)\in L$ is said to
have the independence property (or IP) with respect to a theory $T$
if in some model of $T$ there are elements $\left\langle a_{i}:i\in\omega\right\rangle $
and $\left\langle b_{s}:s\subseteq\omega\right\rangle $ such that
$\varphi\left(a_{i},b_{s}\right)$ holds if and only if $i\in s$.
A complete first-order theory is called dependent (or NIP) if no formula
has the independence property. The class of dependent theories contains
both the stable and the o-minimal theories, but also for example the
theory of algebraically closed valued fields.
\begin{fact}
\label{fac: NIP iff ded-many types}\cite[Theorem II.4.11]{MR1083551}
A countable theory $T$ is dependent if and only if $\left|S_{1}(M)\right|\leq\left(\ded\left|M\right|\right)^{\aleph_{0}}$
for all $M\models T$.
\end{fact}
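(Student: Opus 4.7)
My plan is to establish both implications by reducing the global counting problem to a per-formula bound and then exploiting the combinatorial characterization of $\NIP$ by alternation numbers. Since $L$ is countable, the equivalence classes of formulas $\varphi(x,y)$ with $|x|=1$ form a countable set, and every $p\in S_{1}(M)$ is determined by its family of $\varphi$-restrictions $(p\upharpoonright\varphi)_{\varphi}$. Hence $|S_{1}(M)|\leq\prod_{\varphi}|S_{\varphi}(M)|$, and for the forward direction the bound reduces to showing that $|S_{\varphi}(M)|\leq\ded|M|$ for each $\NIP$ formula $\varphi(x,y)$; the conclusion $(\ded|M|)^{\aleph_{0}}$ then follows by taking the countable product over all $\varphi$.

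For the per-formula bound I would invoke the alternation characterization of $\NIP$: $\varphi(x,y)$ is $\NIP$ if and only if there is a finite $n_{\varphi}$ such that for any indiscernible sequence $\langle b_{i}:i\in I\rangle$ and any parameter $a$, the set $\{i:\models\varphi(a,b_{i})\}$ is a union of at most $n_{\varphi}$ convex subsets of $I$. Passing to a saturated extension and extracting via Ramsey-type arguments an indiscernible sequence over $M$ of size $\leq|M|$ that dominates the $\varphi$-type structure, one shows that each $\varphi$-type over $M$ is coded by at most $n_{\varphi}$ Dedekind cuts in a linear order of size $\leq|M|$, yielding $|S_{\varphi}(M)|\leq(\ded|M|)^{n_{\varphi}}$; taking the countable product over all $\varphi$ absorbs the finite exponents and gives $(\ded|M|)^{\aleph_{0}}$.

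For the converse direction, assume $\varphi(x,y)$ witnesses $\IP$. By compactness, for every cardinal $\kappa$ there is $M\models T$ of size $\kappa$ containing a shattered family $(a_{i})_{i<\kappa}$; for each $s\subseteq\kappa$, the partial type $\{\varphi(a_{i},y):i\in s\}\cup\{\neg\varphi(a_{i},y):i\in\kappa\setminus s\}$ is consistent, producing $2^{\kappa}$ distinct complete $1$-types over $M$, so $|S_{1}(M)|\geq 2^{\kappa}$. Invoking Mitchell's consistency of $\ded\kappa<2^{\kappa}$ for $\kappa$ of uncountable cofinality (cited in the introduction), in a suitable model of set theory one may choose $\kappa$ so that also $(\ded\kappa)^{\aleph_{0}}<2^{\kappa}$, and the constructed $M$ then witnesses failure of the bound. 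The principal obstacle is the per-formula bound $|S_{\varphi}(M)|\leq\ded|M|$: converting the alternation lemma into a counting statement on $\varphi$-types over an arbitrary $M$ requires a careful extraction of indiscernible sequences inside (or over) $M$ of size $\leq|M|$ that capture all the $\varphi$-structure, together with a case analysis showing that each $\varphi$-type is recoverable from finitely many Dedekind cuts in these sequences — this is the technical heart of the argument, carried out in Shelah's original proof.
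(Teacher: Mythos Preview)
The paper does not prove this statement: it is cited as a fact from Shelah's book, so there is no in-paper proof to compare your sketch against.

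That said, your converse direction has a genuine gap. You produce, for an $\IP$ theory, a model $M$ with $|S_{1}(M)|=2^{|M|}$ and then invoke Mitchell to pass to a universe where $(\ded\kappa)^{\aleph_{0}}<2^{\kappa}$. But this only establishes the implication \emph{consistently}; as literally written the ``if and only if'' is not a ZFC theorem: under GCH one has $(\ded\kappa)^{\aleph_{0}}=2^{\kappa}$ for every $\kappa$, so the right-hand side holds for \emph{every} theory, dependent or not. The way the characterization is actually used (here and in the literature) is via absoluteness: a ZFC proof that $|S_{1}(M)|\le(\ded|M|)^{\aleph_{0}}$ for all $M$ remains valid in any forcing extension, in particular one where $(\ded\kappa)^{\aleph_{0}}<2^{\kappa}$ has been arranged; there the bound forces $T$ to be dependent, and since ``$T$ has $\IP$'' is an arithmetical property of the countable theory $T$ and hence absolute, one concludes dependence in ZFC. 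Your argument omits this step. Note also that Mitchell's theorem gives only $\ded\kappa<2^{\kappa}$; the stronger separation $(\ded\kappa)^{\aleph_{0}}<2^{\kappa}$ needs an additional argument.

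Your forward direction follows the standard outline, and you rightly identify the per-formula bound $|S_{\varphi}(M)|\le\ded|M|$ as the technical core. Your proposed route through a single extracted indiscernible sequence is too loose as written --- one sequence will not control all $\varphi$-types over $M$, and Shelah's argument proceeds by a tree construction rather than by alternation over a fixed sequence --- but since you explicitly defer to the original for this step I will not press the point.
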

In this section we show that when considering the two-cardinal transfer
to arbitrarily large gaps between the cardinals, the situation for
dependent theories is not better than for arbitrary theories. Namely,
for every $n<\omega$ we construct a dependent theory $T$ which has
a $(\beth_{m},\aleph_{0})$-model for all $m<n$, but does not have
any $\left(\beth_{\omega},\aleph_{0}\right)$-models.
\begin{defn}
For any $n\in\mathbb{N}$, let $L_{n}$ be the language consisting
of: 
\begin{enumerate}
\item $P_{m}$, $Q_{m}$ are unary predicates for $m<n$.
\item $f_{m}$ is a unary function for $m+1<n$.
\item $<_{m}$ is a binary relation for $m<n$.
\end{enumerate}
\end{defn}

\begin{defn}
We define a universal theory $T_{n}^{\forall}$ in the language $L_{n}$
saying:
\begin{enumerate}
\item $\left\langle Q_{m}\,:\, m<n\right\rangle $ is a partition of the
universe.
\item $<_{m}$ is a linear order on $Q_{m}$.
\item $P_{m}$ is a subset of $Q_{m}$.
\item $f_{m}$ is a unary function such that:

\begin{enumerate}
\item It is 1-to-1 from $P_{m+1}$ into $Q_{m}\setminus P_{m}$.
\item It is 1-to-1 from $Q_{m}\setminus P_{m}$ into $P_{m+1}$.
\item $f(f(x))=x$. 
\item It is the identity on $\left\{ x\,:\, x\notin P_{m+1}\cup\left(Q_{m}\setminus P_{m}\right)\right\} $.
\end{enumerate}
\end{enumerate}
\end{defn}
\begin{claim}

\begin{enumerate}
\item $T_{n}^{\forall}$ is a consistent universal theory.
\item $T_{n}^{\forall}$ has JEP and AP.
\item If $M\models T_{n}^{\forall}$ and $A\subseteq M$ is finite, then
the substructure generated by $A$ is finite, and in fact of size
at most $2\times\left|A\right|$.
\item $T_{n}^{\forall}$ has a model completion $T_{n}$ which is $\aleph_{0}$-categorical
and eliminates quantifiers.
\end{enumerate}
\end{claim}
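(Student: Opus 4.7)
For part (1), I would exhibit an explicit model: take each $Q_m$ to be a disjoint union of two copies of $\mathbb{Q}$, declare $P_m$ to be the first copy (so $Q_m\setminus P_m$ is the second), order $Q_m$ by placing $P_m$ below $Q_m\setminus P_m$ (each with the usual order on $\mathbb{Q}$), and for each $m+1<n$ fix any bijection between the countable sets $P_{m+1}$ and $Q_m\setminus P_m$, defining $f_m$ to be this bijection on $P_{m+1}\cup(Q_m\setminus P_m)$ and the identity elsewhere. All axioms hold by inspection.

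For part (2), JEP follows from the ordered-sum construction: given $M_1,M_2\models T_n^{\forall}$, set $Q_m(M)=Q_m(M_1)\sqcup Q_m(M_2)$, $P_m(M)=P_m(M_1)\sqcup P_m(M_2)$, extend $<_m$ by declaring $Q_m(M_1)<_m Q_m(M_2)$, and take $f_m=f_m^{M_1}\sqcup f_m^{M_2}$. For AP, given $M_0\hookrightarrow M_1,M_2$, form the set-theoretic pushout $M_1\cup_{M_0}M_2$; on each $Q_m$ extend the linear order by the standard cut-respecting amalgamation (each new element of $M_i\setminus M_0$ sits in its own $M_0$-cut, with ties broken by placing $M_1$-elements before $M_2$-elements); and define $f_m$ as the common extension of $f_m^{M_1}$ and $f_m^{M_2}$, well-defined since they agree on $M_0$. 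The axioms are preserved because the images of $P_{m+1}(M_i)\setminus M_0$ ($i=1,2$) land in disjoint subsets of $Q_m(M)\setminus P_m(M)$, so $1$-to-$1$-ness and the involution identity survive.

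For part (3), the key observation is that for $a\in Q_k$, at most one $f_m$ moves $a$: namely $f_{k-1}$ if $a\in P_k$ (with image in $Q_{k-1}\setminus P_{k-1}$) or $f_k$ if $a\in Q_k\setminus P_k$ (with image in $P_{k+1}$); every other $f_{m'}$ fixes $a$ by axiom~(4d), since $a$ lies in neither $P_{m'+1}$ nor $Q_{m'}\setminus P_{m'}$. A symmetric check shows that the image $b=f_m(a)$ is returned to $a$ by the same $f_m$ and is fixed by every other function. Hence the substructure generated by a single element has size at most $2$, and the bound $2|A|$ follows.

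For part (4), I would apply Fra\"iss\'e's theorem to the class $\mathcal{K}$ of finite models of $T_n^{\forall}$. By (3) this coincides with the class of finitely generated substructures of models of $T_n^{\forall}$; HP is automatic (as $T_n^{\forall}$ is universal), JEP and AP come from the constructions in (2) applied to finite inputs, and $\mathcal{K}$ has only countably many isomorphism types since the language is finite. The Fra\"iss\'e limit $M^{*}$ is then countable and ultra-homogeneous with age $\mathcal{K}$; set $T_n=\Th(M^{*})$. Ultra-homogeneity together with local finiteness yields quantifier elimination, and $\aleph_0$-categoricity follows from Ryll--Nardzewski. That $T_n$ is the model completion of $T_n^{\forall}$ follows because every model of $T_n^{\forall}$ embeds into a model of $T_n$ by a standard Fra\"iss\'e amalgamation argument, and QE over the universal theory $T_n^{\forall}$ gives model completeness. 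The step I expect to require the most care is the verification of AP in (2): checking that the extended $f_m$ remains $1$-to-$1$ and involutive in the amalgam, and that all of axioms~(4a)--(4d) survive.
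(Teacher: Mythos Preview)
Your proposal is correct and matches the paper's approach: the paper simply declares (1)--(3) ``easy to see'' and obtains (4) by citing Hodges' Theorem~7.4.1 (the standard result producing an $\aleph_0$-categorical model completion with quantifier elimination from a universal theory with JEP, AP, and uniform local finiteness), which is precisely the Fra\"iss\'e machinery you spell out. Your write-up just supplies the details the paper omits.
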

\begin{proof}
(1), (2) and (3) are easy to see, and (4) follows by e.g. \cite[Theorem 7.4.1]{Hodges}.\end{proof}
\begin{claim}
\label{cla: axiomatization of T_n} In fact, $T_{n}$ is axiomatized
by:
\begin{enumerate}
\item $T_{n}^{\forall}$
\item $<_{m}$ is a dense linear order without end-points.
\item $P_{m}$ is both dense and co-dense in $Q_{m}$.
\item $f_{m}$ is a 1-to-1 function from $P_{m+1}$ onto $Q_{m}\setminus P_{m}$.
\item If $a_{1}<_{m}c_{1}$ and $a_{2}<_{m+1}c_{2}$, then there are $b_{1}\in Q_{m}\setminus P_{m}$
and $b_{2}\in P_{m+1}$ such that: $a_{1}<_{m}b_{1}<_{m}c_{1}$, $a_{2}<_{m+1}b_{2}<_{m+1}c_{2}$
and $f_{m}(b_{2})=b_{1}$.
\end{enumerate}
\end{claim}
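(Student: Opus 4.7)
The plan is to show that $T_{n}$ and the theory $T'_{n}$ axiomatized by (1)--(5) coincide, in two directions.

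First, I would verify $T_{n}\vdash T'_{n}$. Each of the axioms (2)--(5) beyond $T_{n}^{\forall}$ has the form $\forall\bar{x}\bigl(\varphi(\bar{x})\to\exists\bar{y}\,\psi(\bar{x},\bar{y})\bigr)$ with $\varphi,\psi$ quantifier-free. Given $M\models T_{n}$ and $\bar{a}\in M$ satisfying such a hypothesis, I construct by an explicit one-step amalgamation an extension $M'\supseteq M$ in the class of models of $T_{n}^{\forall}$ in which witnesses $\bar{b}$ for $\psi(\bar{a},\bar{y})$ have been freely added (together with any forced $f_{m}$-images, placed in new points of the appropriate linear orders). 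Since $T_{n}$ is a model completion, $M$ is existentially closed in the class of models of $T_{n}^{\forall}$, so $\bar{b}$ already exists in $M$.

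Conversely, to show $T'_{n}\vdash T_{n}$ I would prove $T'_{n}$ is $\aleph_{0}$-categorical; since $T'_{n}$ has no finite models (by axiom (2)), the \L os--Vaught test then yields completeness of $T'_{n}$, and as $T_{n}\supseteq T'_{n}$ is itself complete (model completion of a theory with JEP), the two theories agree. The $\aleph_{0}$-categoricity is established by a back-and-forth argument between countable $M,N\models T'_{n}$. Maintaining a partial isomorphism $h:A\to B$ between finite substructures (which are finite thanks to the size bound from the preceding claim), given a new element $a\in M\setminus A$ I need to find $b\in N$ such that $h\cup\{(a,b)\}$ extends to an isomorphism of the substructures generated. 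The quantifier-free type of $a$ over $A$ is determined by (i) which $Q_{m}$ contains $a$, (ii) whether $a\in P_{m}$, (iii) the $<_{m}$-interval of $a$ over $A\cap Q_{m}$, and (iv) for $a\in P_{m}$ with $m\geq 1$ or $a\in Q_{m}\setminus P_{m}$ with $m\leq n-2$, the $<_{m\mp 1}$-interval that its $f$-image must occupy. Axiom (5) is exactly tailored to produce a single $b$ (with its $f$-image) realizing the paired interval constraints simultaneously, while axiom (3) handles the unilateral boundary cases (elements of $P_{0}$, or of $Q_{n-1}\setminus P_{n-1}$), where only one density requirement remains.

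The main obstacle is the bookkeeping in the back-and-forth: one must check that for each possible sort and membership situation of $a$, the corresponding instance of (5) applied to the correct pair $(m,m+1)$ (or the density/co-density assertions from (3) at the boundaries $m=0$ and $m=n-1$) really provides a choice of $b$ satisfying \emph{both} order constraints on the two sides at once, rather than requiring separate, incompatible picks. Once this case analysis is dispatched --- using that axiom (5) is symmetric in its two pairs of bounds --- the back-and-forth produces the desired isomorphism, completing the proof.
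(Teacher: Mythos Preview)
The paper states this claim without proof, so there is nothing to compare against directly; your proposal supplies what the authors left as a routine verification, and your overall strategy is the standard and correct one. Showing $T_n\vdash T'_n$ via existential closedness of models of a model completion is exactly right, and establishing the converse by proving $\aleph_0$-categoricity of $T'_n$ through back-and-forth (then invoking \L o\'s--Vaught and completeness of $T_n$) is also the natural route.

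Two small points deserve a bit more care in the back-and-forth. First, axiom~(5) as written requires two-sided bounds $a_1<_mc_1$ and $a_2<_{m+1}c_2$; when the cut of $a$ (or of $f_m(a)$) over the current finite substructure is a half-infinite interval, you must first manufacture an artificial bound in $N$ using axiom~(2) (no endpoints) before invoking~(5). This is harmless but should be said. Second, you should note explicitly that if $a\notin A$ then its $f$-partner is also outside $A$ (since $A$ is a substructure and $f_m$ is an involution on the relevant set), so the pair $\{a,f_m(a)\}$ really is disjoint from $A$ and the substructure generated by $A\cup\{a\}$ is exactly $A\cup\{a,f_m(a)\}$. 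With these bookkeeping remarks your case analysis is complete: axiom~(5) handles the paired cases $P_{m+1}\leftrightarrow Q_m\setminus P_m$ for $0\le m<n-1$, while axiom~(3) alone handles the singleton cases $a\in P_0$ and $a\in Q_{n-1}\setminus P_{n-1}$.
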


\begin{prop}
\label{prop: T_n is dependent} $T_{n}$ is dependent.\end{prop}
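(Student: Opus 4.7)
The plan is to apply Fact~\ref{fac: NIP iff ded-many types}: in order to show $T_n$ is dependent it suffices to prove $\left|S_1(M)\right|\leq(\ded\left|M\right|)^{\aleph_0}$ for every infinite $M\models T_n$. By the preceding claim $T_n$ admits quantifier elimination and the substructure generated by a single element has size at most $2$; hence the substructure generated over $M$ by any single new point $x$ has cardinality at most $\left|M\right|+2$, and $\tp(x/M)$ is determined by the quantifier-free diagram of this generated structure.

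Unwinding the definitions and using that each $f_j$ is an involution which is the identity off $P_{j+1}\cup(Q_j\setminus P_j)$, one checks that the only possibly new element added to $M$ by closing off under the language is a single companion point $y$: namely $y=f_{m-1}(x)\in Q_{m-1}\setminus P_{m-1}$ when $x\in P_m$ with $m>0$, $y=f_m(x)\in P_{m+1}$ when $x\in Q_m\setminus P_m$ with $m+1<n$, and no new element otherwise. Therefore a non-realised $1$-type over $M$ is specified by the following finite data: the pair $(m,\varepsilon)$ indicating the sort of $x$ and whether $x\in P_m$, the cut filled by $x$ in $(Q_m(M),<_m)$, and either the identity of $y$ if $y\in M$, or, if $y\notin M$, the cut filled by $y$ in $(Q_{m\pm1}(M),<_{m\pm1})$.

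To complete the count I must verify that every such consistent specification is actually realised --- equivalently, that the cuts filled by $x$ and by $y$ may be prescribed independently. This is precisely the content of axiom~(5) of Claim~\ref{cla: axiomatization of T_n}: combined with quantifier elimination and compactness, it shows that for any cut $\mathfrak{c}$ in $(P_{m+1}(M),<_{m+1})$ and any cut $\mathfrak{c}'$ in $((Q_m\setminus P_m)(M),<_m)$ whose two sides are non-empty, the partial type ``$x$ realises $\mathfrak{c}$ and $f_m(x)$ realises $\mathfrak{c}'$'' is consistent over $M$. Counting possibilities then gives
\[
\left|S_1(M)\right|\;\leq\;\left|M\right|+n\cdot\bigl(\left|M\right|\cdot\ded\left|M\right|+(\ded\left|M\right|)^2\bigr)\;\leq\;(\ded\left|M\right|)^{\aleph_0},
\]
so $T_n$ is dependent by Fact~\ref{fac: NIP iff ded-many types}. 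The only genuine technical point is the cut-independence of $x$ and $f_m(x)$ used in the previous step --- but this independence is exactly what axiom~(5) was designed to deliver, and the remainder is routine quantifier-elimination and compactness bookkeeping.
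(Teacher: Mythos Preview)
Your proof is correct and follows essentially the same route as the paper's: both use quantifier elimination to see that a non-algebraic $1$-type over $M$ is determined by the sort of $x$, whether $P_m(x)$ holds, the cut of $x$ in $<_m$, and the cut of its $f$-companion in the neighbouring sort, and then bound $|S_1(M)|$ in terms of $\ded|M|$. One small remark: the realizability check via axiom~(5) is unnecessary for the argument --- for an \emph{upper} bound on $|S_1(M)|$ you only need that the listed data determine the type, not that every combination is realised (the paper simply omits this and arrives at the slightly sharper $|S_1(M)|\le\ded|M|$, though your $(\ded|M|)^{\aleph_0}$ is equally sufficient for Fact~\ref{fac: NIP iff ded-many types}).
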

\begin{proof}
Let $M\models T_{n}$. Let $p(x)\in S_{1}(M)$ be a non-algebraic
type. By quantifier elimination it is determined by:
\begin{itemize}
\item $Q_{m}(x)$ for the corresponding $m<n$.
\item Fixing the corresponding cut of $x$ over $M$ in the order $<_{m}$.
\item Saying if $P_{m}(x)$ holds or not.
\item If it doesn't hold, fixing the cut of $f_{m}\left(x\right)$ over
$M$ in the order $<_{m+1}$.
\item If it holds, fixing the cut $f_{m}\left(x\right)$ over $M$ in the
order $<_{m-1}$.
\end{itemize}
Then clearly $\left|S_{1}(M)\right|\leq\ded\left|M\right|$, so $T_{n}$
is dependent.\end{proof}
\begin{rem}
In fact it is easy to check that $T_{n}$ is strongly dependent (see
\cite{Sh863}).\end{rem}
\begin{prop}
\label{prop: no beth omega models}
\begin{enumerate}
\item If $M\models T_{n}$ and $\left|P_{0}^{M}\right|=\lambda$, then $\left|M\right|\leq\beth_{n}(\lambda)$.
\item Moreover: $\left|P_{m+1}^{M}\right|=\left|Q_{m}^{M}\setminus P_{m}^{M}\right|\leq\left|Q_{m}^{M}\right|$
and $\left|Q_{m}^{M}\right|\leq^{+}\Ded\left|P_{m}^{M}\right|$.
\end{enumerate}
\end{prop}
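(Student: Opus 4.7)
My plan is to prove part (2) first, from which (1) follows by a simple induction. Part (2) has three assertions:
(a) $|P_{m+1}^M| = |Q_m^M \setminus P_m^M|$,
(b) $|Q_m^M \setminus P_m^M| \leq |Q_m^M|$,
(c) $|Q_m^M| \leq^+ \ded |P_m^M|$.
Assertion (a) is immediate from clause (4) of Claim \ref{cla: axiomatization of T_n}, which says $f_m$ is a bijection between $P_{m+1}$ and $Q_m \setminus P_m$; assertion (b) is trivial set theory.

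The substantive content is (c). I would argue as follows. By clauses (2) and (3) of Claim \ref{cla: axiomatization of T_n}, $<_m$ makes $Q_m^M$ a dense linear order without end-points, and $P_m^M$ is a dense subset of $Q_m^M$ with respect to $<_m$. Hence $Q_m^M$ is a linear order admitting a dense subset of size $\leq |P_m^M|$, so by the very definition of $\ded$ as a supremum over such pairs (and using Definition \ref{def: leq plus} of $\leq^+$), we get $|Q_m^M| \leq^+ \ded |P_m^M|$. In particular $|Q_m^M| \leq \ded |P_m^M| \leq 2^{|P_m^M|}$.

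For (1), I would argue by induction on $m<n$ that $|Q_m^M| \leq \beth_{m+1}(\lambda)$. The base case uses (c): $|Q_0^M| \leq \ded(\lambda) \leq 2^\lambda = \beth_1(\lambda)$. For the inductive step, combining (a) and (b) gives $|P_{m+1}^M| \leq |Q_m^M| \leq \beth_{m+1}(\lambda)$, and then (c) yields
\[
|Q_{m+1}^M| \leq \ded |P_{m+1}^M| \leq 2^{\beth_{m+1}(\lambda)} = \beth_{m+2}(\lambda).
\]
Since $\{Q_m : m<n\}$ partitions $M$, we conclude $|M| = \sum_{m<n} |Q_m^M| \leq n \cdot \beth_n(\lambda) = \beth_n(\lambda)$.

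I do not anticipate a genuine obstacle here; the only point that needs a little care is keeping track of the "$^+$" in (c), which is just a matter of invoking Definition \ref{def: leq plus} rather than the cruder bound $|Q_m^M| \leq \ded|P_m^M|$. The substantive model-theoretic content (density and co-density of $P_m$ in $Q_m$, and the bijection $f_m$) is already delivered by the axiomatization Claim \ref{cla: axiomatization of T_n}; everything else is bookkeeping.
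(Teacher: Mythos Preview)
Your proposal is correct. The paper actually states this proposition without proof, treating both parts as immediate from the axiomatization in Claim~\ref{cla: axiomatization of T_n}; your write-up supplies exactly the straightforward details (the bijection $f_m$ for (a), density of $P_m$ in $Q_m$ for (c), and the obvious induction for (1)) that the authors left implicit.
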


\begin{claim}
\label{cla: choosing a model, successor step} Assume that $\lambda_{0}<\ldots<\lambda_{n}$
and $\lambda_{m+1}\leq^{+}\Ded\lambda_{m}$. Then $T_{n}$ has a model
$M$ such that $\left|P_{0}^{M}\right|=\lambda_{0}$ and :
\begin{enumerate}
\item $\left|P_{m}^{M}\right|=\lambda_{m}$.
\item $\left|Q_{m}^{M}\right|=\lambda_{m+1}$.
\end{enumerate}
\end{claim}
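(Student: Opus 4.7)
The plan is to build $M$ in two phases: first construct the linear orders $(Q_m,<_m)$ together with their dense subsets $P_m$ of the prescribed cardinalities, and then define the bijections $f_m:P_{m+1}\to Q_m\setminus P_m$ so as to verify axiom (5) of Claim~\ref{cla: axiomatization of T_n}. The remaining axioms will be immediate from the construction, so by quantifier elimination $M$ will be a model of $T_n$.

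For the first phase, for each $m<n$ I would apply the hypothesis $\lambda_{m+1}\leq^+\ded\lambda_m$ together with Fact~\ref{fac: equivalent definitions of ded}(3) to obtain a linear order of size $\lambda_m$ with $\lambda_{m+1}$ Dedekind cuts of a common regular cofinality from both sides. Adjoining those cuts produces $Q_m$ of size $\lambda_{m+1}$ in which $P_m$ (the original order) is dense and codense of size $\lambda_m$. A minor thickening (replacing intervals that remain ``empty'' by a copy of $\mathbb{Q}$) turns $Q_m$ into a DLO without endpoints, and by iterating this thickening one arranges that every nonempty open interval of $Q_m$ meets $Q_m\setminus P_m$ in $\lambda_{m+1}$-many points. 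Since $|P_{m+1}|=|Q_m\setminus P_m|=\lambda_{m+1}$, bijections between them exist as sets; the real content is to choose one satisfying axiom (5).

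For the second phase I would construct each $f_m$ by a transfinite back-and-forth. The axiom-(5) constraints are indexed by pairs $(J,J')$ of nonempty basic open intervals in $Q_{m+1}$ and $Q_m$, totalling $|Q_{m+1}|^2\cdot|Q_m|^2=\lambda_{m+2}$ many. I would enumerate these constraints together with $P_{m+1}$ and $Q_m\setminus P_m$ in a single transfinite list and process them stage by stage: at a ``constraint stage'' for $(J,J')$, if the partial bijection does not already contain a pair $(b_2,b_1)$ with $b_2\in J$ and $b_1\in J'$, choose such fresh elements and set $f_m(b_2)=b_1$; at a ``domain/range stage'', ensure a designated element of $P_{m+1}$ (resp.\ of $Q_m\setminus P_m$) is in the domain (resp.\ the range) of $f_m$. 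Once defined, extend $f_m$ to an involution on all of $M$ by the identity outside $P_{m+1}\cup(Q_m\setminus P_m)$.

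The main obstacle is the apparent cardinality mismatch: the graph of $f_m$ contains only $\lambda_{m+1}$ pairs, while axiom (5) imposes up to $\lambda_{m+2}$ constraints, which may be strictly larger. The crucial observation is that any single pair $(b_2,f_m(b_2))$ automatically witnesses every basic-interval pair containing it, and there are $\lambda_{m+2}$-many such interval pairs. The back-and-forth must therefore be organized so that new additions are made sparingly and each genuine addition discharges a large batch of still-unwitnessed constraints; using the strengthened density arranged in the first phase, and enumerating constraints by a suitable diagonalization, one shows that only $\lambda_{m+1}$ additions are ever forced, so the resulting $f_m$ is indeed a bijection of the right size. With all $f_m$ in place, a routine check against Claim~\ref{cla: axiomatization of T_n} gives $M\models T_n$, and $|P_m^M|=\lambda_m$, $|Q_m^M|=\lambda_{m+1}$ hold by construction.
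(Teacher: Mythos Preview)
Your outline follows the paper's strategy (build the orders first, then define the $f_m$ by a transfinite back-and-forth), but there is a genuine gap in the first phase and a misdiagnosis in the second.

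\textbf{The gap.} You arrange only one of the two ``full cardinality'' conditions needed for the back-and-forth: you make every open interval of $Q_m$ meet $Q_m\setminus P_m$ in $\lambda_{m+1}$ many points, but you never arrange that every open interval of $P_{m+1}$ (with endpoints in $P_{m+1}$) has $\lambda_{m+1}$ many points. Your $P_{m+1}$ is just some order of size $\lambda_{m+1}$ coming from Fact~\ref{fac: equivalent definitions of ded}(3), and such an order may well have intervals of size $\aleph_0$. At a stage of the back-and-forth where you must find a fresh $b_2\in P_{m+1}\cap(a_2,c_2)$, all elements of that small interval may already be assigned, and then the construction halts. The paper handles this by a single uniform trick: given any $J$ with a dense $I\subseteq J$, replace $J$ by the lexicographic order on $(J\times\mathbb{Q})^{<\omega}$ and $I$ by $(I\times\mathbb{Q})^{<\omega}$; this simultaneously forces every interval of the new $J$ to have size $|J|$ and every interval of the new $I$ to have size $|I|$. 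With both sides having full-cardinality intervals, the transfinite induction is immediate.

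\textbf{The misdiagnosis.} Your ``main obstacle'' --- $\lambda_{m+2}$ constraints versus $\lambda_{m+1}$ graph pairs --- dissolves once you note that $P_{m+1}$ is dense in $Q_{m+1}$: any interval $(a_2,c_2)$ with endpoints in $Q_{m+1}$ contains a subinterval with endpoints in $P_{m+1}$, and a witness for the smaller interval is a witness for the larger. Hence it suffices to satisfy axiom~(5) for the $\lambda_{m+1}$ many interval-pairs with endpoints in $P_{m+1}$ and $Q_m$ respectively, and no ``suitable diagonalization'' is needed. The real obstacle is the missing full-cardinality condition on $P_{m+1}$ above; once that is in place, the back-and-forth of length $\lambda_{m+1}$ goes through without any bookkeeping tricks.
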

\begin{proof}
By assumption, for every $m<n$ we can find a linear order $J_{m}$
of cardinality $\lambda_{m+1}$ with a dense subset $I_{m}$ of cardinality
$\lambda_{m}$. We may also assume that: 
\begin{enumerate}
\item For every $a<b$ in $J_{m}$, $\left|(a,b)\right|=\lambda_{m+1}$
and $\left|(a,b)\cap I_{m}\right|=\lambda_{m}$ (so in particular
$I_{m}$ is also co-dense in $J_{m}$).
\item $I_{m}$ and $J_{m}$ are dense without end-points.
\end{enumerate}
Indeed, given an arbitrary infinite linear order $I$ and a dense
subset $J$, let $I_{*}=I\times\mathbb{Q}$, $J_{*}=J\times\mathbb{Q}$
and let $I_{**}$ be the lexicographic order on $I_{*}^{<\omega}$,
$J_{**}=J_{*}^{<\omega}$. It is easy to see that $\left|I_{**}\right|=\left|I\right|$,
$\left|J_{**}\right|=\left|J\right|$, $J_{**}$ is dense in $I_{**}$,
both orders are dense without end-points, and that for any $a<b$
in $J_{**}$, $\left|\left(a,b\right)\right|=\left|I\right|$ and
$\left|\left(a,b\right)\cap J_{**}\right|=\left|J\right|$.

We define $M$ by taking $Q_{m}^{M}=J_{m}$, $P_{m}^{M}=I_{m}$ and
$<_{m}^{M}=<_{J_{m}}$. We may choose $f_{m}$ satisfying \ref{cla: axiomatization of T_n}(4)
by transfinite induction as all the relevant intervals have ``full
cardinality'' by the assumption. By Claim \ref{cla: axiomatization of T_n},
$M\models T_{n}$.
\end{proof}

\begin{thm}
For every $n<\omega$ there is a dependent countable theory $T$ which
has a $(\beth_{m},\aleph_{0})$-model for all $m<n$, but does not
have any $\left(\beth_{\omega},\aleph_{0}\right)$-models.\end{thm}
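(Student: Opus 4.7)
The plan is to take $T = T_K$ from this section for a suitable $K = K(n) < \omega$. The theory $T_K$ is countable (since the language $L_K$ is finite) and $\aleph_{0}$-categorical by Claim \ref{cla: axiomatization of T_n}, and dependent by Proposition \ref{prop: T_n is dependent}. The failure of $(\beth_{\omega}, \aleph_{0})$-models is immediate from Proposition \ref{prop: no beth omega models}(1): since $K < \omega$ is a fixed natural number, any model $M \models T_K$ with $|P_{0}^{M}| = \aleph_{0}$ satisfies $|M| \leq \beth_{K}(\aleph_{0}) = \beth_{K} < \beth_{\omega}$, so no $(\beth_{\omega}, \aleph_{0})$-model exists.

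For the existence of $(\beth_{m}, \aleph_{0})$-models for every $m < n$, the strategy is to first build one model of $T_K$ of size at least $\beth_{n-1}$ with countable $P_{0}$, and then cut it down with L\"owenheim--Skolem. Concretely, I would invoke Corollary \ref{cor: ded bigger then beth} with $\mu = \aleph_{0}$ and target $k = n - 1$, obtaining $K < \omega$ together with a strictly increasing chain $\aleph_{0} = \lambda_{0} < \lambda_{1} < \dots < \lambda_{K}$ satisfying $\lambda_{i+1} \leq^{+} \ded(\lambda_{i})$ and $\lambda_{K} \geq \beth_{n-1}$. (If the chain supplied by the corollary starts at a finite cardinal, one simply prepends $\aleph_{0}$; this preserves the $\leq^{+}$-inequality for the new initial step because $\ded(\aleph_{0}) = 2^{\aleph_{0}}$.) Claim \ref{cla: choosing a model, successor step} applied to this chain yields a model $M \models T_K$ with $|P_{0}^{M}| = \aleph_{0}$ and $|M| = \lambda_{K} \geq \beth_{n-1}$.

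Finally, for each $m < n$, I apply downward L\"owenheim--Skolem to $M$: since $L_K$ is countable and $\aleph_{0} \leq \beth_{m} \leq |M|$, there is an elementary substructure $M_{m} \preceq M$ with $P_{0}^{M} \subseteq M_{m}$ and $|M_{m}| = \beth_{m}$. Then $P_{0}^{M_{m}} = M_{m} \cap P_{0}^{M} = P_{0}^{M}$ has cardinality $\aleph_{0}$, so $M_{m}$ is a $(\beth_{m}, \aleph_{0})$-model of $T$, completing the construction. The only step requiring any real care is arranging that the chain in the second paragraph begins exactly at $\aleph_{0}$ so that Claim \ref{cla: choosing a model, successor step} outputs a structure with infinite $P_{0}$; every other step is bookkeeping, since the genuine content of the theorem lives in Corollary \ref{cor: ded bigger then beth} and its PCF-theoretic underpinnings from Section \ref{sec: on the number of dedekind cuts}.
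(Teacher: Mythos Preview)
Your argument is correct and follows essentially the same approach as the paper: the paper's proof is the one-liner ``Follows by combining Propositions \ref{prop: T_n is dependent}, \ref{prop: no beth omega models}, Claim \ref{cla: choosing a model, successor step} and Corollary \ref{cor: ded bigger then beth},'' and you have spelled out exactly this combination, including the L\"owenheim--Skolem step needed to obtain a model of each exact size $\beth_{m}$ (a detail the paper leaves implicit). One trivial remark: with $\mu=\aleph_{0}$ the sequence from Corollary \ref{cor: ded bigger then beth} already has $\lambda_{0}=\aleph_{0}$ since $\ded$ is only defined for infinite cardinals, so the ``prepend $\aleph_{0}$'' caveat is unnecessary.
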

\begin{proof}
Follows by combining Propositions \ref{prop: T_n is dependent}, \ref{prop: no beth omega models},
Claim  \ref{cla: choosing a model, successor step} and Corollary
\ref{cor: ded bigger then beth}.
\end{proof}

\section{\label{sec: Hanf number for omitting types} Hanf number for omitting
types}

Now we elaborate on the previous example, and for every countable
ordinal $\beta<\omega_{1}$ we find a countable ordinal $\alpha_{*}<\omega_{1}$,
a countable theory $T_{\alpha_{*}}$ and a partial type $p(x)$ such
that:
\begin{itemize}
\item there is a model of $T_{\alpha_{*}}$ omitting $p\left(x\right)$
and of size $\geq\beth_{\beta}$,
\item any model of $T_{\alpha_{*}}$ omitting $p\left(x\right)$ is of size
at most $\beth_{\alpha_{*}}$.
\end{itemize}

\begin{defn}
\label{Def: limit theory T_*} Fix an ordinal $\alpha_{*}<\omega_{1}$.
We describe our theory $T_{\alpha_{*}}$.

\begin{enumerate}
\item $\left\langle Q_{\alpha}\left(x\right)\,:\,\alpha\leq\alpha_{*}\right\rangle $
are pairwise disjoint infinite unary predicates.
\item $<_{\alpha}$ is a dense linear order without end-points on $Q_{\alpha}\left(x\right)$.
\item $P_{\alpha}\left(x\right)$ is a dense co-dense subset of $Q_{\alpha}\left(x\right)$.
\item $R\left(x\right)$ is a unary predicate disjoint from all $Q_{\alpha}$'s.
\item $\left\langle c_{n}\,:\, n\in\omega\right\rangle $ are constants
and $R\left(c_{n}\right)$ for all $n\in\omega$.
\item $<_{R}$ is a linear order on $R\left(x\right)$, and $\left(R,<_{R},\left\langle c_{n}\,:\, n\in\omega\right\rangle \right)$
is a model of $\mbox{Th}\left(\mathbb{N},<\,,\left\langle n:n\in\mathbb{N}\right\rangle \right)$.
\item $s_{R}\left(x\right),s_{R}^{-1}\left(x\right)$ are the successor
and the predecessor functions on $R\left(x\right)$.
\item $\left\langle d_{r}\,:\, r\in\mathbb{Q}\right\rangle $ are constants
and $P_{0}\left(d_{r}\right)$ for all $r\in\mathbb{Q}$.
\item For every successor ordinal $\delta+1\leq\alpha_{*}$:

\begin{enumerate}
\item $f_{\delta}$ is a bijection from $P_{\delta+1}$ onto $Q_{\delta}\setminus P_{\delta}$,
identity on $\left\{ x\,:\, x\notin P_{\delta+1}\cup\left(Q_{\delta}\setminus P_{\delta}\right)\right\} $
and such that $f_{\delta}\left(f_{\delta}\left(x\right)\right)=x$.
\item If $a_{1}<_{\delta}c_{1}$ and $a_{2}<_{\delta+1}c_{2}$ for some
$a_{1},c_{1}\in Q_{\delta}\setminus P_{\delta}$ and $a_{2},c_{2}\in P_{\delta+1}$,
then there are $b_{1}\in Q_{\delta}\setminus P_{\delta}$ and $b_{2}\in P_{\delta+1}$
such that: $a_{1}<_{\delta}b_{1}<_{\delta}c_{1}$, $a_{2}<_{\delta+1}b_{2}<_{\delta+1}c_{2}$
and $f_{\delta}(b_{2})=b_{1}$.
\end{enumerate}
\item For every limit ordinal $\delta\leq\alpha_{*}$:

\begin{enumerate}
\item We fix some listing $\left\langle \alpha_{\delta,n}\,:\, n<\omega\right\rangle $
with $\sum_{n<\omega}\alpha_{\delta,n}=\delta$, where for every $n$
we have that $\alpha_{\delta,n}$ is a\emph{ successor} ordinal larger
than the successor of $\alpha_{\delta,n-1}$ and larger than any $\alpha_{\delta',m}$
from a similar listing for a smaller limit ordinal $\delta'$.
\item We have a function $G_{\delta}\left(x\right)$ such that:

\begin{enumerate}
\item $G_{\delta}$ is the identity on $\left\{ x\,:\, x\notin P_{\delta}\right\} $.
\item $G_{\delta}:\, P_{\delta}\left(x\right)\to R\left(x\right)$ is onto.
\item for every $y\in R\left(x\right)$, $G_{\delta}^{-1}\left(y\right)$
is a dense linear order without end-points.
\item If $y_{1}<_{R}y_{2}$, then $G_{\delta}^{-1}\left(y_{1}\right)$ is
co-dense in $G_{\delta}^{-1}\left(y_{2}\right)$, and every cut of
$G_{\delta}^{-1}\left(y_{1}\right)$ realized by some $a\in P_{\delta}$
is realized by some $a'\in G_{\delta}^{-1}\left(y_{2}\right)$.
\end{enumerate}
\item We have a relation $E_{\delta}\left(x_{1},x_{2},y\right)$ which holds
if and only if $x_{1}$ and $x_{2}$ are from $P_{\delta}\setminus G_{\delta}^{-1}\left(y\right)$
and realize the same cut over $G_{\delta}^{-1}\left(y\right)$.
\item For each $n\in\omega$ we have a function $F_{\delta,n}$ such that:

\begin{enumerate}
\item It is a bijection from $G_{\delta}^{-1}\left(c_{n}\right)\setminus G_{\delta}^{-1}\left(c_{n-1}\right)$
onto $P_{\alpha_{\delta,n}}\left(x\right)$, the identity on $\{x\,:\, x\notin P_{\alpha_{\delta,n}}\cup G_{\delta}^{-1}\left(c_{n}\right)\}$
and such that $F_{\delta,n}\left(F_{\delta,n}\left(x\right)\right)=x$.
\item For any $n\in\omega$, if $a_{1}<_{\alpha_{\delta,n}}b_{1}$ with
$a_{1},b_{1}\in P_{\alpha_{\delta,n}}$ and $a_{2}<_{\delta}d<_{\delta}b_{2}$
with $a_{2},b_{2}\in G_{\delta}^{-1}\left(c_{n}\right)$, then there
are $e_{1}\in P_{\alpha_{\delta,n}}$ and $e_{2}\in G_{\delta}^{-1}\left(c_{n}\right)\setminus G_{\delta}^{-1}\left(c_{n-1}\right)$
such that: $a_{1}<_{\delta}e_{1}<_{\delta}b_{1}$, $a_{2}<_{\delta}e_{2}<_{\delta}b_{2}$,
$F_{\delta,n}(e_{2})=e_{1}$ and $E_{\delta}\left(d,e_{2},\alpha\right)$
for all $\alpha<c_{n}$.
\end{enumerate}
\end{enumerate}
\end{enumerate}
\end{defn}
\begin{claim}
\label{cla: T_* is dependent}$T_{\alpha_{*}}$ is a complete dependent
theory.\end{claim}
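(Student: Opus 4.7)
The plan is to adapt the proof that $T_n$ is dependent to this transfinite setting: I would set up $T_{\alpha_*}$ as the model completion of its universal fragment to obtain quantifier elimination (hence completeness), then use QE to count $1$-types and verify the bound $|S_1(M)|\le(\ded|M|)^{\aleph_0}$ needed by Fact~\ref{fac: NIP iff ded-many types} for dependence.

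For completeness, consider the universal part $T_{\alpha_*}^\forall$ of Definition~\ref{Def: limit theory T_*}: the partition of the sorts $Q_\alpha,R$, the involutivity and domain/range/identity clauses for $f_\delta$ and $F_{\delta,n}$, the functionality of $G_\delta$ onto $R$, the defining clause for $E_\delta$, the constants $c_n,d_r$ with the successor axioms for $s_R,s_R^{-1}$, and the weak linear-order axioms on each $Q_\alpha$. All density and genericity clauses are dropped. A finite-generation lemma (analogous to the one for $T_n$) shows that every finitely generated substructure of a model of $T_{\alpha_*}^\forall$ is finite: each element gives rise to at most one partner under each involutive function, at most one image in $R$, and its $s_R$-orbit reaches a named constant in finitely many steps. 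Hence $T_{\alpha_*}^\forall$ has JEP and AP, so by \cite[Theorem 7.4.1]{Hodges} it admits a model completion with quantifier elimination. The density/genericity clauses of Definition~\ref{Def: limit theory T_*} (DLO on $Q_\alpha$, density and co-density of $P_\alpha$, successor-level genericity of $f_\delta$, and the limit-level density of slices $G_\delta^{-1}(y_1)$ in $G_\delta^{-1}(y_2)$ together with genericity of $F_{\delta,n}$) are exactly those needed to realise every consistent one-point extension of a finite substructure, so $T_{\alpha_*}$ itself is this model completion. Completeness follows because any two models share the common substructure generated by the named constants and then decide the same sentences by QE.

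For dependence, I would show by transfinite induction on $\alpha\le\alpha_*$ that the number of non-algebraic $1$-types over $M$ concentrating on $Q_\alpha$ is at most $(\ded|M|)^{\aleph_0}$; summing over the countably many sorts gives the same bound on $|S_1(M)|$. By QE, a type on $Q_\alpha$ is determined by the cut of $x$ in $<_\alpha$, the truth value of $P_\alpha(x)$, and the types of the relevant images of $x$: at a successor $\alpha=\delta+1$, one extra cut of $f_\delta(x)$ in $<_\delta$, contributing an extra factor of $\ded|M|$; at a limit $\alpha=\delta$, the cut of $G_\delta(x)$ in $<_R$, together with, for each $n<\omega$, a cut in $P_{\alpha_{\delta,n}}$ arising from $F_{\delta,n}$ applied to the $E_\delta$-representative of $x$ in the standard slice $G_\delta^{-1}(c_n)$. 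By the inductive hypothesis each of these $\omega$-many pieces of data has at most $(\ded|M|)^{\aleph_0}$ options, and their countable product is still $(\ded|M|)^{\aleph_0}$.

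The main obstacle is the limit-ordinal step, where one must verify (i) that the limit-level density axioms (particularly the one saying cuts in $G_\delta^{-1}(y_1)$ realised by $P_\delta$ are already realised in $G_\delta^{-1}(y_2)$, and the genericity axiom for $F_{\delta,n}$) are strong enough for the back-and-forth underlying QE to run through; and (ii) that the $E_\delta$-profile of $x$ against the standard slices, together with the $F_{\delta,n}$-cuts, genuinely recovers the type of $x$ in arbitrary (possibly non-standard) slices $G_\delta^{-1}(y)$ --- this is the design reason for including $E_\delta$ and $s_R,s_R^{-1}$ in the language. Once QE is established, the countability of $\alpha_*$ is what keeps the transfinite type-counting induction within the bound $(\ded|M|)^{\aleph_0}$.
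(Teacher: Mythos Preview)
Your overall plan---establish quantifier elimination, then count $1$-types by cuts and invoke Fact~\ref{fac: NIP iff ded-many types}---is exactly the paper's route. However, the specific justification you give for QE has a real gap, and your type-counting is more elaborate than necessary.

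\textbf{The QE argument via Hodges does not go through as stated.} Your finite-generation lemma is false: the successor functions $s_R,s_R^{-1}$ on the sort $R$ make the term-closure of a single non-standard element of $R$ an infinite $\mathbb{Z}$-chain (your claim that ``its $s_R$-orbit reaches a named constant in finitely many steps'' fails precisely for non-standard elements, which any universal fragment must allow). Even over the constants the closure is not uniformly finite, and $T_{\alpha_*}$ is not $\aleph_0$-categorical (the sort $R$ carries $\Th(\mathbb{N},<)$), so \cite[Theorem~7.4.1]{Hodges} does not apply. The paper avoids this entirely: it simply asserts that completeness and QE follow by a direct back-and-forth, which is the right framework here.

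\textbf{The type-counting needs no induction.} The paper's argument is a straightforward case analysis: for each sort it lists the (boundedly many) cuts that determine a $1$-type. The point---which your inductive setup obscures---is that the term-closure of a single element under all the function symbols $f_\gamma,G_\delta,F_{\delta,n}$ is \emph{finite over the constants}, because the ordinals $\alpha_{\delta,n}$ were chosen pairwise distinct and successor, so the chain of non-identity function applications terminates after at most three or four steps (e.g.\ $x\mapsto f_{\alpha-1}(x)\mapsto F_{\delta,n}(x)\mapsto G_\delta(F_{\delta,n}(x))=c_n$). Hence each type is determined by a bounded finite number of cuts, giving $|S_1(M)|\le(\ded|M|)^{\aleph_0}$ directly. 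Your worry about recovering the type against non-standard slices $G_\delta^{-1}(y)$ dissolves once you note that $E_\delta(x,a,b)$ for $a,b\in M$ is already determined by the $<_\delta$-cut of $x$ over $M$, so no separate $E_\delta$-profile or $\omega$-sequence of cuts is needed.
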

\begin{proof}
It it easy to check by back-and-forth that $T$ is a complete theory
eliminating quantifiers.

Let $M\models T_{\alpha_{*}}$ and let $p\left(x\right)\in S_{1}\left(M\right)$
be a non-algebraic type. We have the following options:
\begin{enumerate}
\item $p\left(x\right)\vdash Q_{\alpha}\left(x\right)$ for some successor
$\alpha<\alpha_{*}$. Then $p\left(x\right)$ is determined by:

\begin{enumerate}
\item Fixing the cut of $x$ over $M$ in the order $<_{\alpha}$.
\item If $p\left(x\right)\vdash\neg P_{\alpha}(x)$:

\begin{enumerate}
\item Fixing the cut of $f_{\alpha}\left(x\right)$ over $M$ in the order
$<_{\alpha+1}$.
\item If $\alpha+1$ occurs as $\alpha_{\delta,n}$ for some limit $\delta<\alpha_{*}$,
then fixing the cut of $F_{\delta,n}\left(f_{\alpha}\left(x\right)\right)$
over $M$ in the order $<_{\delta}$, and fixing the cut of $G_{\delta}\left(F_{\delta,n}\left(f_{\alpha}\left(x\right)\right)\right)$
in $<_{R}$ over $M$.
\end{enumerate}
\item If $p\left(x\right)\vdash P_{\alpha}(x)$: 

\begin{enumerate}
\item fixing the cut $f_{\alpha-1}\left(x\right)$ over $M$ in the order
$<_{\alpha-1}$.
\item If $\alpha$ occurs as $\alpha_{\delta,n}$ for some limit $\delta<\alpha_{*}$,
then fixing the cut of $F_{\delta,n}\left(x\right)$ over $M$ in
the order $<_{\delta}$, and fixing the cut of $G_{\delta}\left(F_{\delta,n}\left(x\right)\right)$
in $<_{R}$ over $M$.
\end{enumerate}
\end{enumerate}
\item $p\left(x\right)\vdash Q_{\delta}\left(x\right)$ for some limit $\delta$.
Then $p\left(x\right)$ is determined by:

\begin{enumerate}
\item Fixing the cut of $x$ over $M$ in the order $<_{\delta}$.
\item If $P_{\delta}\left(x\right)$ does not hold, then similar to 2(b).
\item If $P_{\delta}\left(x\right)$ holds:

\begin{enumerate}
\item Fixing the cut of $G_{\delta}\left(x\right)$ over $M$ in $<_{R}$.
\item If $G_{\delta}\left(x\right)=c_{n}$ for some $n\in\omega$ also fixing
the cut of $F_{\delta,n}\left(x\right)$ over $M$ in $<_{\alpha_{\delta,n}}$.
\end{enumerate}
\end{enumerate}
\item If $p\left(x\right)\vdash R\left(x\right)$, then fixing the cut of
$x$ in $<_{R}$ over $M$.
\item $p\left(x\right)\vdash\left\{ \neg Q_{\alpha}\left(x\right)\,:\,\alpha<\alpha_{*}\right\} \cup\left\{ \neg R\left(x\right)\right\} $.
Then $p\left(x\right)$ is a complete type.
\end{enumerate}
Altogether it follows that $\left|S_{1}\left(M\right)\right|\leq\left(\ded\left|M\right|\right)^{\aleph_{0}}$,
thus $T$ is dependent by Fact \ref{fac: NIP iff ded-many types}.
\end{proof}
~

Consider the type $p_{*}(x)=\left\{ \neg P_{\alpha}(x)\,:\,0<\alpha\leq\alpha_{*}\right\} \cup\left\{ x\neq c_{n}\,:\, n\in\omega\right\} \cup\left\{ x\neq d_{r}\,:\, r\in\mathbb{Q}\right\} $.
\begin{claim}
\label{claim: models of T_* are not too big} Let $M$ be a model
of $T_{\alpha_{*}}$ omitting $p_{*}\left(x\right)$. Then $\left|M\right|\leq\beth_{\alpha_{*}}$.\end{claim}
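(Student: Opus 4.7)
The plan is to establish by transfinite induction on $\alpha\leq\alpha_*$ that $|P_\alpha^M|\leq\beth_\alpha$, and then assemble the pieces of $M$. The key preliminary observation is that the omitting hypothesis pins down the anchors of the construction: any $a\in M$ satisfying no $P_\alpha$ with $0<\alpha\leq\alpha_*$ must be one of the named constants $c_n$ or $d_r$. Since $P_0\subseteq Q_0$ is disjoint from every $Q_\alpha$ with $\alpha>0$, this forces $P_0^M=\{d_r^M:r\in\mathbb{Q}\}$ and hence $|P_0^M|\leq\aleph_0$; since $R$ is disjoint from every $Q_\alpha$, the same reasoning forces $R^M=\{c_n^M:n\in\omega\}$; and elements outside all the $Q_\alpha$'s and $R$ are excluded altogether. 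This reduces the problem to bounding the sizes of the $P_\alpha^M$'s.

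For the successor step $\alpha=\delta+1$, Definition \ref{Def: limit theory T_*}(9) gives a bijection $f_\delta\colon P_{\delta+1}^M\to Q_\delta^M\setminus P_\delta^M$. Because $P_\delta$ is dense and co-dense in $Q_\delta$ under $<_\delta$, each element of $Q_\delta\setminus P_\delta$ determines a distinct Dedekind cut of $(P_\delta^M,<_\delta)$, whence $|Q_\delta^M\setminus P_\delta^M|\leq^+\ded|P_\delta^M|$. Using the inductive hypothesis one gets $|P_{\delta+1}^M|\leq\ded(\beth_\delta)\leq 2^{\beth_\delta}=\beth_{\delta+1}$.

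The limit step $\alpha=\delta$ is the main work and I expect to be the principal obstacle. Since omitting $p_*$ forces $R^M=\{c_n:n\in\omega\}$, the surjection $G_\delta\colon P_\delta^M\to R^M$ of Definition \ref{Def: limit theory T_*}(10)(b) lands inside the standard part, so $P_\delta^M$ decomposes as the countable disjoint union of the successive layers $G_\delta^{-1}(c_n)\setminus G_\delta^{-1}(c_{n-1})$. The bijection $F_{\delta,n}$ of Definition \ref{Def: limit theory T_*}(10)(d) identifies each such layer with a copy of $P_{\alpha_{\delta,n}}^M$, giving $|P_\delta^M|=\sum_{n<\omega}|P_{\alpha_{\delta,n}}^M|$. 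Since $\alpha_{\delta,n}<\delta$ is a successor and $\sup_n\alpha_{\delta,n}=\delta$, the inductive hypothesis yields $|P_\delta^M|\leq\aleph_0\cdot\sup_n\beth_{\alpha_{\delta,n}}=\beth_\delta$. What makes this step delicate is that it is precisely where one must exploit the combined force of the omission (which kills non-standard fibers of $R$) and the carefully engineered listing $\langle\alpha_{\delta,n}\rangle$; without the image of $G_\delta$ being forced onto $\{c_n\}$ the fiber over a single non-standard element of $R$ could be arbitrarily large and the inductive bound would collapse.

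To conclude, write $M$ as the disjoint union of the $Q_\alpha^M$ for $\alpha\leq\alpha_*$ and $R^M$, noting that nothing else survives by the preliminary observation. For $\alpha<\alpha_*$ we have $|Q_\alpha^M|\leq|P_\alpha^M|+|P_{\alpha+1}^M|\leq\beth_{\alpha_*}$; for $\alpha=\alpha_*$ the set $Q_{\alpha_*}^M\setminus P_{\alpha_*}^M$ is empty by the same omitting argument (its hypothetical elements would lie in no $P_\beta$ and equal no constant), so $|Q_{\alpha_*}^M|=|P_{\alpha_*}^M|\leq\beth_{\alpha_*}$. Summing over the at most countably many $\alpha\leq\alpha_*$ (as $\alpha_*<\omega_1$) and adding $|R^M|=\aleph_0$ gives $|M|\leq\aleph_0\cdot\beth_{\alpha_*}=\beth_{\alpha_*}$, as required.
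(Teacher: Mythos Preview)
Your proof is correct and follows essentially the same approach as the paper: both argue by induction on $\alpha\leq\alpha_*$ that $|P_\alpha^M|\leq\beth_\alpha$, handling the base case via the omitting hypothesis, the successor case via the bijection $f_\delta$ and density of $P_\delta$ in $Q_\delta$, and the limit case via the decomposition $P_\delta^M=\bigcup_n(G_\delta^{-1}(c_n)\setminus G_\delta^{-1}(c_{n-1}))$ together with the bijections $F_{\delta,n}$. Your write-up is in fact more complete than the paper's, which simply ends with ``The claim follows'' after the induction; you explicitly assemble $|M|$ from the $|Q_\alpha^M|$'s and $|R^M|$, and you point out that the omitting hypothesis also forces $Q_{\alpha_*}^M\setminus P_{\alpha_*}^M=\emptyset$ and kills elements lying outside all named sorts---details the paper leaves implicit.
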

\begin{proof}
First of all, if $M$ omits $p_{*}$ then $\left|P_{0}^{M}\right|=\aleph_{0}$
and $\left|R^{M}\right|=\aleph_{0}$. We show by induction for $\delta\leq\alpha_{*}$
that $\left|P_{\delta}^{M}\right|\leq\beth_{\delta}$. If $\delta=\alpha+1$
is a successor, then clearly $\left|P_{\delta+1}^{M}\right|\leq^{+}\ded\left|P_{\delta}^{M}\right|$,
thus $\leq\beth_{\delta+1}$ by induction. If $\delta$ is a limit,
then by construction $\left|P_{\delta}^{M}\right|\leq\sum_{n<\omega}\left(\left|P_{\alpha_{\delta,n}}^{M}\right|\right)\leq\sum_{n<\omega}\beth_{\alpha_{\delta,n}}=\beth_{\delta}$.
The claim follows.\end{proof}
\begin{claim}
\label{cla: choosing a model of T_*} For every $\beta<\omega_{1}$
there is $\alpha_{*}<\omega_{1}$ such that $T_{\alpha_{*}}$ has
a model omitting $p_{*}\left(x\right)$ of size $\geq\beth_{\beta}$.\end{claim}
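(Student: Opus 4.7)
The plan is to proceed by transfinite induction on $\beta<\omega_{1}$, building simultaneously a countable ordinal $\alpha_{*}(\beta)$, listings $\langle\alpha_{\delta,n}:n<\omega\rangle$ at each limit $\delta\le\alpha_{*}(\beta)$ satisfying Definition~\ref{Def: limit theory T_*}(10)(a), and a model $M(\beta)\models T_{\alpha_{*}(\beta)}$ omitting $p_{*}$ with $|P_{\alpha_{*}(\beta)}^{M(\beta)}|\ge\beth_{\beta}$. The model will be built layer-by-layer on $\delta\le\alpha_{*}(\beta)$: I set $P_{0}^{M}=\{d_{r}:r\in\mathbb{Q}\}$ and $R^{M}=\{c_{n}:n\in\omega\}$, so that omission of $p_{*}$ follows as in the proof of Claim~\ref{claim: models of T_* are not too big}; at each successor stage I apply Claim~\ref{cla: choosing a model, successor step} to realise a prescribed $\lambda_{\delta+1}\le^{+}\ded(\lambda_{\delta})$; and at each limit stage I realise $P_{\delta}^{M}$ as $\bigsqcup_{n<\omega}\bigl(G_{\delta}^{-1}(c_{n})\setminus G_{\delta}^{-1}(c_{n-1})\bigr)$ with each slice $F_{\delta,n}$-isomorphic to the already-built $P_{\alpha_{\delta,n}}^{M}$, fulfilling clause~(10)(d)(ii) of Definition~\ref{Def: limit theory T_*} by a back-and-forth amalgamation along the cuts.

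For $\beta=0$ one takes $\alpha_{*}(0)=0$ and $M(0)$ any countable instance. For a successor $\beta=\gamma+1$, I apply Corollary~\ref{cor: ded bigger then beth} with $\mu=\lambda_{\alpha_{*}(\gamma)}\ge\beth_{\gamma}$ and $k=1$ to obtain a finite chain $\mu_{0}<\cdots<\mu_{n}$ with $\mu_{i+1}\le^{+}\ded(\mu_{i})$ and $\mu_{n}\ge\beth_{\gamma+1}$, set $\alpha_{*}(\gamma+1)=\alpha_{*}(\gamma)+n$, and extend the construction by $n$ successive Claim~\ref{cla: choosing a model, successor step} steps (no new limit listings are created, so the monotonicity constraint is preserved trivially). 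For a limit $\beta$, fix a strictly increasing sequence $\beta_{n}\nearrow\beta$; invoking the successor step iteratively, I arrange during the induction that the construction for $\beta_{n+1}$ extends the one for $\beta_{n}$, that each $\alpha_{*}(\beta_{n})$ is a successor ordinal, and that $\alpha_{*}(\beta_{n})$ dominates every listing entry used in the construction for $\beta_{n-1}$ --- achieved by padding with extra $\ded^{+}$-iterations, which do not decrease the attained cardinalities. Setting $\alpha_{*}(\beta)=\sup_{n}\alpha_{*}(\beta_{n})$ and choosing the new listing $\alpha_{\alpha_{*}(\beta),n}=\alpha_{*}(\beta_{n})$ --- which satisfies Definition~\ref{Def: limit theory T_*}(10)(a) and is cofinal in $\alpha_{*}(\beta)$ by the preparation --- the limit-stage construction yields $|P_{\alpha_{*}(\beta)}^{M(\beta)}|=\sum_{n}|P_{\alpha_{*}(\beta_{n})}^{M(\beta_{n})}|\ge\sum_{n}\beth_{\beta_{n}}=\beth_{\beta}$, as required.

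The main technical obstacle will be the bookkeeping at limit stages: the listings have to be globally consistent with the monotonicity of Definition~\ref{Def: limit theory T_*}(10)(a) across every limit $\delta\le\alpha_{*}(\beta)$ while simultaneously matching the cofinal sequences $\beta_{n}$ used to combine the inductive data. The padding trick sketched above --- artificially inflating the $\alpha_{*}(\beta_{n})$'s by harmless $\ded^{+}$-iterations --- is the key device that decouples control of the cardinal growth (furnished by Corollary~\ref{cor: ded bigger then beth}) from control of the ordinal growth (needed to respect the listing monotonicity), and it is what turns the evident finitary recipe into a genuine transfinite construction.
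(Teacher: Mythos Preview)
Your proposal is correct and rests on the same three ingredients as the paper: Corollary~\ref{cor: ded bigger then beth} to manufacture the cardinal growth, Claim~\ref{cla: choosing a model, successor step} at successor levels, and a layered construction of $P_{\delta},Q_{\delta}$ at limit levels. The organizational difference is that the paper \emph{decouples} the two inductions: it first fixes, once and for all, a single strictly increasing sequence $(\lambda_{\alpha})_{\alpha\le\alpha_{*}}$ with $\lambda_{0}=\aleph_{0}$, $\lambda_{\alpha+1}\le^{+}\ded\lambda_{\alpha}$, $\lambda_{\delta}=\sum_{\alpha<\delta}\lambda_{\alpha}$ at limits and $\lambda_{\alpha_{*}}\ge\beth_{\beta}$ (this is a straight induction on $\beta$ using Corollary~\ref{cor: ded bigger then beth}, and even yields $\alpha_{*}<\beta+\omega$); only then does it build the model by a clean induction on $\alpha\le\alpha_{*}$. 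Because the listings at every limit $\delta\le\alpha_{*}$ can be chosen \emph{after} $\alpha_{*}$ and all the $\lambda_{\alpha}$'s are known, the monotonicity constraint of Definition~\ref{Def: limit theory T_*}(10)(a) is trivially satisfiable and no padding device is needed. Your interleaved induction on $\beta$ works, but it forces you to invent the padding trick precisely to solve a bookkeeping problem that the paper's two-phase layout never creates.

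One place where your sketch is thinner than the paper is the limit-level model construction: the paper builds $P_{\delta}$ explicitly as an increasing union $I_{0}\subseteq I_{1}\subseteq\cdots$ where $I_{n+1}$ is obtained from $I_{n}$ by inserting a copy of a suitable dense order into every cut, and then checks the cardinality arithmetic and the density clauses (10)(b)(iii)--(iv), (10)(d)(ii) directly. Your phrase ``back-and-forth amalgamation along the cuts'' points to the same idea, but in a full write-up you would need to supply this construction; it is the one place where something beyond ``take a disjoint union of the $P_{\alpha_{\delta,n}}$'' is genuinely required.
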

\begin{proof}
By Corollary \ref{cor: ded bigger then beth} and induction there
is $\alpha_{*}<\beta+\omega$ such that we can choose a strictly increasing
sequence of cardinals $\left(\lambda_{\alpha}\right)_{\alpha<\alpha_{*}}$
satisfying:
\begin{itemize}
\item $\lambda_{0}=\aleph_{0}$.
\item $\lambda_{\alpha+1}\leq^{+}\ded\lambda_{\alpha}$.
\item For a limit $\alpha$, $\lambda_{\alpha}=\sum_{\alpha'<\alpha}\lambda_{\alpha'}$.
\item $\lambda_{\alpha_{*}}\geq\beth_{\beta}$.
\end{itemize}
We define a model of $T_{\alpha_{*}}$ omitting $p_{*}$ and such
that $\left|P_{\alpha}^{M}\right|=\lambda_{\alpha}$ by induction
on $\alpha$.
\begin{enumerate}
\item Let $R^{M}=\left(\omega,<\right)$ with $c_{n}$ naming $n$. Let
$Q_{0}^{M}=\left(\mathbb{R},<\right)$ and let $P_{0}^{M}=\mathbb{Q}$,
with $d_{r}$ naming $r$.
\item For a successor $\delta=\alpha+1$: Similarly to Claim \ref{cla: choosing a model, successor step},
we can find a linear order $J$ of cardinality $\lambda_{\delta}$
with a dense subset $I$ of cardinality $\lambda_{\alpha}$. We may
also assume that for every $a<b$ in $J$, $\left|(a,b)\right|=\lambda_{\delta}$
and $\left|(a,b)\cap I\right|=\lambda_{\alpha}$. We let $Q_{\delta}^{M}=J$,
$P_{\delta}^{M}=I$ and $<_{\delta}^{M}=<_{J}$. We may choose $f_{\delta}$
satisfying Definition \ref{Def: limit theory T_*} by transfinite
induction as all the relevant intervals have ``full cardinality''
by construction and the inductive assumption.
\item For a limit $\delta\leq\alpha_{*}$:

\begin{enumerate}
\item First we construct orders $I_{n},J_{n}$ by induction on $n<\omega$:

\begin{enumerate}
\item Let $I_{0}\subseteq J_{0}$ be dense linear orders without end-points
and such that $I_{0}$ is dense-codense in $J_{0}$, $\left|I_{0}\right|=\lambda_{\alpha_{\delta,0}}$,
$\left|J_{0}\right|=\lambda_{\alpha_{\delta,0}+1}$, and such that
for every $a<b$ in $J_{0}$, $\left|(a,b)\right|=\lambda_{\alpha_{\delta,0}+1}$
and $\left|(a,b)\cap I_{0}\right|=\lambda_{\alpha_{\delta,0}}$ (can
be chosen by assumption on $\lambda_{\alpha}$ as in the proof of
Claim \ref{cla: choosing a model, successor step}). 
\item Let $I_{n+1}',J_{n+1}'$ be dense linear orders without end-points
and such that $I_{n+1}'$ is dense-codense in $J_{n+1}'$, $\left|I_{n+1}'\right|=\lambda_{\alpha_{\delta,n+1}}$,
$\left|J_{n+1}'\right|=\lambda_{\alpha_{\delta,n+1}+1}$, and such
that for every $a<b$ in $J_{n+1}'$, $\left|(a,b)\right|=\lambda_{\alpha_{\delta,n+1}+1}$
and $\left|(a,b)\cap I_{n+1}'\right|=\lambda_{\alpha_{\delta,n+1}}$
(again can be chosen by assumption on $\lambda_{\alpha}$ as in the
proof of Claim \ref{cla: choosing a model, successor step}). Let
$I_{n+1}$ extend $I_{n}$ with a copy of $I_{n+1}'$ added in every
cut, and similarly let $J_{n+1}$ extend $J_{n}$ with a copy of $J_{n+1}'$
added in every cut. It follows that $\lambda_{\delta,n+1}\leq|I_{n+1}|\leq\lambda_{\alpha_{\delta,n}+1}\times\lambda_{\alpha_{\delta,n+1}}\leq\lambda_{\alpha_{\delta,n+1}}$
and $\left|J_{n+1}\right|\leq\lambda_{\alpha_{\delta,n}+2}\times\lambda_{\alpha_{\delta,n+1}+1}\leq\lambda_{\alpha_{\delta,n+1}+1}$,
and that $I_{n+1}$ is a dense-codense subset of $J_{n+1}$.
\item Finally, let $I=\bigcup_{n<\omega}I_{n}$ and $J=\bigcup_{n<\omega}J_{n}$.
In particular $I$ is dense-codense in $J$ and both $I,J$ are of
size $\lambda_{\delta}$.
\end{enumerate}
\item We let $P_{\delta}^{M}=I,Q_{\delta}^{M}=J$ and define $G_{\delta}^{M}$
by sending $I_{n}$ to $c_{n}$. By construction of $I_{n}$ and $P_{\alpha_{\delta,n}}^{M}$
and transfinite induction we can find bijections $F_{\delta,n}^{M}$
between $G_{\delta}^{M}\left(c_{n}\right)\setminus G_{\delta}^{M}\left(c_{n-1}\right)=I_{n}\setminus I_{n-1}$
and $P_{\alpha_{\delta,n}}^{M}$ satisfying the axioms of $T_{\alpha_{*}}$.
We let $E\left(x,y,c_{n}\right)$ hold for $x,y$ in $I_{n}\setminus I_{n-1}$
realizing the same cut over $I_{n-1}$.
\end{enumerate}
\end{enumerate}
\end{proof}
\begin{thm}
For every countable ordinal $\beta<\omega_{1}$ there is a complete
countable dependent theory $T$ and a partial type $p(x)$ such that:
\begin{itemize}
\item $T$ has a model omitting $p$ of size $\geq\beth_{\beta}$.
\item Any model of $T$ omitting $p$ is of size $<\beth_{\omega_{1}}$.
\end{itemize}
\end{thm}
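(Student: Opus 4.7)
The plan is to simply assemble the three preceding results about the family of theories $\{T_{\alpha_*}:\alpha_*<\omega_1\}$ introduced in Definition \ref{Def: limit theory T_*}. Given a countable ordinal $\beta<\omega_1$, I would first invoke Claim \ref{cla: choosing a model of T_*} to extract a countable ordinal $\alpha_*<\omega_1$ for which $T_{\alpha_*}$ admits a model omitting the partial type $p_*(x)$ of cardinality at least $\beth_\beta$. Then I would set $T:=T_{\alpha_*}$ and $p(x):=p_*(x)$.

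The first bullet is then immediate from the choice of $\alpha_*$, while completeness and dependence of $T$ are supplied by Claim \ref{cla: T_* is dependent}. For the upper bound, Claim \ref{claim: models of T_* are not too big} gives that any model of $T$ omitting $p$ has size at most $\beth_{\alpha_*}$. Since $\alpha_*$ is a countable ordinal and $\omega_1$ has uncountable cofinality, we have $\beth_{\alpha_*}<\beth_{\omega_1}$, which yields the second bullet.

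The main obstacle has already been dispatched inside the preceding claims; the substantive step is Claim \ref{cla: choosing a model of T_*}, where one must construct a strictly increasing sequence of cardinals $\langle\lambda_\alpha:\alpha\leq\alpha_*\rangle$ starting at $\aleph_0$, with $\lambda_{\alpha+1}\leq^+\ded\lambda_\alpha$ at successor stages and $\lambda_\alpha=\sum_{\alpha'<\alpha}\lambda_{\alpha'}$ at limit stages, whose final value overshoots $\beth_\beta$. This is precisely where the theorem draws on Section \ref{sec: on the number of dedekind cuts}: Corollary \ref{cor: ded bigger then beth} is applied recursively to guarantee the existence of such a sequence of countable length. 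Once the sequence is in hand, the model is built by transfinite induction on $\alpha\leq\alpha_*$, realizing dense-codense pairs $P_\alpha\subseteq Q_\alpha$ of the prescribed cardinalities at successor stages and gluing together the countably many smaller components at limit stages through the functions $G_\delta$ and $F_{\delta,n}$, in such a way that $|P_0^M|=\aleph_0$ and hence $p_*$ is omitted.
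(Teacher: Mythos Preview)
Your proposal is correct and follows exactly the same approach as the paper: the proof in the paper is the single line ``Combining Claims \ref{cla: T_* is dependent}, \ref{claim: models of T_* are not too big} and \ref{cla: choosing a model of T_*},'' and you have spelled out precisely how those three claims fit together, including the small observation that $\beth_{\alpha_*}<\beth_{\omega_1}$ because $\alpha_*<\omega_1$. Your additional paragraph sketching the content of Claim \ref{cla: choosing a model of T_*} is accurate commentary rather than a departure from the paper's argument.
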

\begin{proof}
Combining Claims \ref{cla: T_* is dependent}, \ref{claim: models of T_* are not too big}
and \ref{cla: choosing a model of T_*}.
\end{proof}

\bibliographystyle{alpha}
\bibliography{common}

\end{document}